\def\ba#1\ea{\begin{linenomath}\begin{align*}#1\end{align*}\end{linenomath}}
\def\ban#1\ean{\begin{linenomath}\begin{align}#1\end{align}\end{linenomath}}
\numberwithin{equation}{section}
\newtheorem{theorem}{Theorem}[section]
\newtheorem{lemma}[theorem]{Lemma}
\newtheorem{proposition}[theorem]{Proposition}
\theoremstyle{definition}
\newtheorem{definition}[theorem]{Definition}
\newtheorem*{remark}{Remark}
\title{Catalytic branching random walk}
\author{C\'ecile Mailler\thanks{Department of Mathematical Sciences, University of Bath, Claverton Down, BA2 7AY Bath, UK.\newline Email: \texttt{c.mailler@bath.ac.uk}} \and Bruno Schapira\thanks{Aix-Marseille Universit\'e, CNRS, Centrale Marseille, I2M, UMR 7373, 13453 Marseille, France.\newline Email: \texttt{bruno.schapira@univ-amu.fr}}}
\newcommand{\sss}{\ensuremath{\scriptscriptstyle}}
\definecolor{darkred}{rgb}{0.9,0.1,0.1}
\begin{document}
\maketitle

\begin{abstract}
We show the existence of a phase transition between a localisation and a non-localisation regime for a branching random walk with a catalyst at the origin. More precisely, we consider a continuous-time branching random walk that jumps at rate one, with simple random walk jumps on $\mathbb Z^d$, and that branches (with binary branching) at rate $\lambda>0$ everywhere, except at the origin, where it branches at rate $\lambda_0>\lambda$. We show that, if $\lambda_0$ is large enough, then the occupation measure of the branching random walk localises (i.e.~when normalised by the total number of particles, it converges almost surely without spatial renormalisation), whereas, if $\lambda_0$ is close enough to $\lambda$, then the occupation measure delocalises, in the sense that the proportion of particles in any finite given set converges almost surely to zero.
The case $\lambda = 0$ (when branching only occurs at the origin) has been extensively studied in the literature and a transition between localisation and non-localisation was also exhibited in this case. 
Interestingly, the transition that we observe, conjecture, and partially prove in this paper occurs at the same threshold as in the case~$\lambda=0$.
{One of the strengths of our result is that, in the localisation regime, we are able to prove convergence of the occupation measure, whilst existing results in the case $\lambda = 0$ give convergence of moments instead.}
\end{abstract}

\section{Introduction and main results}
Branching random walks are a simple probabilistic model for populations that move through space. 
In the classical continuous time model, particles all branch at the same rate and between branching events, move according to some Markov process (eg. a simple random walk), independently of each other.
A natural generalisation of this model is to allow the branching rate to depend on the position of the particle: from an application's point of view, one can think of some sites being more favourable to a growth in population than others (for example if there is more food available at those sites).
In some models such as the parabolic Anderson model, in which the branching rates are sampled in an i.i.d.\ way, some sites are sometimes so attractive compared to others that the population localises at these sites or near these sites.
In this paper, we show that localisation can occur in a very simple model where the branching rate is the same everywhere, except at the origin, where it is higher.

We consider the following continuous-time branching random walk on $\mathbb Z^d$ ($d\geq 1$):
at time zero, there is one particle alive in the system, and its position is the origin.
Each particle carries two independent Poisson clocks: the ``jump''-clock that rings at rate~$1$, and the ``branch''-clock that rings at rate~$\lambda_0$ when the particle is at the origin and at rate $\lambda$ everywhere else.
When the jump-clock of a particle rings, then this particle moves to a neighbouring site chosen uniformly at random among the $2d$ neighbouring sites. When the branch-clock of a particle rings, it gives birth to a new particle at the same location. 
This model is a variant of the so-called ``catalytic branching random walk'', which corresponds to taking $\lambda = 0$, and which has been extensively studied in the literature (we give a literature review on this model in Section~\ref{sec:discussion}).

We are interested in the occupation measure of the process, i.e.\ the process $(\Pi_t)_{t\geq 0}$ where, for all~$t\geq 0$,
\[\Pi_t = \sum_{i=1}^{N_t} \delta_{X_i(t)},\]
with $N_t$ the number of particles alive at time $t$, 
and $(X_i(t))_{1\leq i\leq N_t}$ their respective positions.
We assume in the whole paper that $\lambda_0\ge \lambda >0$ (in fact most of our results assume $\lambda_0>\lambda>0$, but we also occasionally discuss the case $\lambda_0 = \lambda$).

\subsection{Results}\label{subsec:results}
Our main result is as follows:
\begin{theorem}\label{th:main}
If $\lambda_0>2d-1+2d\lambda$, then there exists a deterministic probability measure~$\nu$ on $\mathbb Z^d$ such that, almost surely as $t\uparrow\infty$,
\[\widehat \Pi_t := \frac{\Pi_t}{N_t} \to \nu,\]
for the topology of weak convergence on the space of probability measures on $\mathbb Z^d$.
\end{theorem}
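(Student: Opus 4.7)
The plan is the standard spectral / additive-martingale strategy for branching random walks with a catalyst.

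First, analyse the mean operator: by the many-to-one formula, $\mathbb{E}[\Pi_t(x)] = (e^{tH}\delta_0)(x)$, where $H = \Delta + V$ acts on $\ell^2(\mathbb{Z}^d)$, with $\Delta$ the generator of the continuous-time simple random walk and $V(x) = \lambda + (\lambda_0-\lambda)\mathbf{1}_{\{x=0\}}$; by Weyl's theorem the essential spectrum of $H$ is $[\lambda-2,\lambda]$. I would show that, under the hypothesis, $H$ admits a principal eigenvalue $\rho>\lambda$ lying strictly above the essential spectrum, with a positive $\ell^2$ eigenfunction $\phi$ given, up to a constant, by $\phi(x)\propto G_{\rho-\lambda}(0,x)$, where $G_E$ is the resolvent kernel of $-\Delta$ at $E>0$ and $\rho$ is determined by the implicit equation $(\lambda_0-\lambda)\,G_{\rho-\lambda}(0,0)=1$. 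Since $G_E$ decays exponentially in $|x|$ as soon as $E>0$, one has $\phi\in\ell^1(\mathbb{Z}^d)$.

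Second, introduce the additive martingale
\[
W_t := e^{-\rho t}\sum_{i=1}^{N(t)}\phi\bigl(X_i(t)\bigr),
\]
which is a non-negative martingale thanks to $H\phi=\rho\phi$. A many-to-two computation combined with the spectral decomposition of $H$ yields $\sup_t \mathbb{E}[W_t^2]<\infty$, hence $W_t\to W_\infty$ almost surely and in $L^2$; a standard dichotomy argument together with non-extinction of the binary BRW (every particle branches at rate $\ge\lambda>0$) then gives $W_\infty>0$ almost surely. For any bounded $f:\mathbb{Z}^d\to\mathbb{R}$, the same type of argument shows that $M_t^f := e^{-\rho t}\sum_i f(X_i(t))$ converges in $L^2$ to $c_f W_\infty$, where
\[
c_f := \frac{\phi(0)\,\langle f,\phi\rangle_{\ell^2}}{\|\phi\|_{\ell^2}^2}
\]
is the projection of $f$ onto the principal eigenspace. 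Taking $f\equiv 1$ (legitimate since $\phi\in\ell^1$) yields $e^{-\rho t}N(t)\to c_1 W_\infty$ with $c_1>0$; dividing,
\[
\widehat\Pi_t(f) = \frac{M_t^f}{M_t^1}\longrightarrow \frac{c_f}{c_1} = \sum_x f(x)\,\frac{\phi(x)}{\|\phi\|_1} =: \nu(f),
\]
so the deterministic limit is $\nu(x)=\phi(x)/\|\phi\|_1$, a probability measure on $\mathbb{Z}^d$. Pointwise almost sure convergence for each singleton $\{x\}$, together with the tightness provided by $\phi\in\ell^1$, upgrades this to weak almost sure convergence of $\widehat\Pi_t$.

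The main technical obstacle is obtaining a sharp enough second-moment bound on $M_t^f$ \emph{uniformly} in $t$: the integrals appearing in the many-to-two identity are controlled by the size of the spectral gap between $\rho$ and $[\lambda-2,\lambda]$, and it is here that the strong hypothesis $\lambda_0>2d-1+2d\lambda$ is expected to enter, forcing $\rho$ far enough above $\lambda$ for the relevant exponential integrals to close. A secondary subtlety is the passage from $L^2$-convergence of each coordinate $\widehat\Pi_t(\{x\})$ to almost sure weak convergence of the random probability measure $\widehat\Pi_t$ as a whole; this typically requires a Borel--Cantelli argument along a dyadic time subsequence (combined with pathwise regularity of $W_t$) or a stronger $L^p$ moment bound. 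Identifying $W_\infty$ as strictly positive on the sure survival event, as opposed to merely on an event of positive probability, is a further point that must be handled, but it follows from the classical branching-recursion identity for $\mathbb{P}(W_\infty=0)$.
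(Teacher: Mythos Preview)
Your approach is genuinely different from the paper's and is the natural spectral/additive-martingale route. The paper does \emph{not} use the principal eigenpair of $H=\Delta+V$ at all: instead it observes that $(\Pi_{\tau_n})_{n\ge 0}$, taken at successive jump/branching times, is a measure-valued P\'olya process in the sense of~\cite{MM17,MV20}, and then applies the general MVPP convergence theorem of Mailler--Villemonais. The hypothesis $\lambda_0>2d-1+2d\lambda$ enters only when verifying their assumption (A'3), namely that an auxiliary sub-Markovian jump process admits a quasi-stationary distribution with uniform attraction; this is checked via a Foster--Lyapunov criterion of Champagnat--Villemonais. In particular the strong hypothesis is \emph{not} used for any second-moment bound, contrary to your expectation: the $L^2$-boundedness of $W_t$ follows already from the existence of a spectral gap $\rho>\lambda$, i.e.\ from the weaker condition $\lambda_0-\lambda>\gamma_d$.

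There is, however, a real gap in your outline, precisely at the step ``taking $f\equiv 1$ (legitimate since $\phi\in\ell^1$)''. The fact that $\phi\in\ell^1$ only makes the constant $c_1=\langle 1,\phi\rangle/\|\phi\|_2^2$ well-defined; it does not give $M_t^1=e^{-\rho t}N(t)\to c_1W_\infty$. For $f\in\ell^2$ one can indeed show $\mathbb E[(M_t^f-c_fW_t)^2]\to 0$ by projecting $f-c_f\phi$ onto $\phi^\perp$ and using $\|e^{uH}g\|_\infty\le\|e^{uH}g\|_2\le e^{\lambda u}\|g\|_2$; but $1\notin\ell^2$, so $g=1-c_1\phi\notin\ell^2$ and this argument collapses. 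Moreover $M_t^1$ is not a martingale, so ``the same type of argument'' as for $W_t$ does not apply either. This is not a technicality: the authors themselves remark (just after the statement of Theorem~\ref{th:main}) that passing from random normalisation by $N(t)$ to deterministic normalisation by $e^{\rho t}$ is a further step they have not carried out. The strength of the paper's MVPP route is exactly that it works directly with the self-normalised measure $\Pi_t/N(t)$ and never needs to control $e^{-\rho t}N(t)$ on its own. If your spectral approach could be pushed through---which would require a separate argument for the convergence of $e^{-\rho t}N(t)$, for instance via an $h$-transform and a Lyapunov bound for the Doob-transformed walk---it would likely yield localisation under the weaker hypothesis $\lambda_0-\lambda>\gamma_d$, i.e.\ prove Conjecture~\ref{conj}(ii); the fact that the authors leave this open suggests the missing step is not routine.
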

The strength of this result is that it gives almost sure convergence of the occupation measure of the branching random walk. In comparison, existing results in the case $\lambda = 0$, such as in~\cite{ABY98, DR}, only give convergence of all moments (see Section~\ref{sec:discussion} for a more detailed discussion and more references on the case $\lambda = 0$). 
Another notable difference is that we use a random normalisation, namely we divide the empirical distribution of particles by~$N_t$, and our limit is a deterministic measure. 
This contrasts for instance with limiting results in the case $\lambda = 0$, such as in D\"oring and Roberts~\cite{DR} or Watanabe~\cite{W67} in a continuous setting, where the normalisation is deterministic and the limiting measure is multiplied by a random factor.
We believe that our results (Theorem~\ref{th:main} and Lemma~\ref{lem:balance1} below) may imply convergence of 
$\mathrm e^{-(\lambda+(\lambda_0-\lambda)\nu_0)t}\Pi_t$.
Proving this would require to estimate the speed of convergence in Theorem~\ref{th:main}, which we have not tried to do so far.

\medskip
We believe that the condition $\lambda_0>2d-1+2d\lambda$ is not optimal. 
To support that claim we first state the following proposition:
\begin{proposition}\label{prop:balance}
Let $(e_1, \ldots, e_d)$ be the canonical basis of $\mathbb R^d$.
If $\widehat\Pi_t \to \nu$ almost surely for the topology of weak convergence, then $\nu$ satisfies
\begin{equation}
\label{eq:balance1}
(1+(\lambda_0-\lambda)\nu_0)\nu_x = \frac1{2d}\sum_{i=1}^d (\nu_{x+e_i}+\nu_{x-e_i}) ,\quad (\forall x\neq0)
\end{equation}
and
\begin{equation}\label{eq:balance2}
(1-(\lambda_0-\lambda)(1-\nu_0))\nu_0= \frac1{2d}\sum_{i=1}^d (\nu_{e_i}+\nu_{-e_i}).
\end{equation}
\end{proposition}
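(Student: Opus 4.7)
The plan is to apply It\^o's formula for jump processes to the ratio $\widehat N_x(t) := \Pi_t(\{x\})/N(t)$, obtain its semimartingale decomposition, and then extract the balance equations from the fact that the drift must vanish in the limit.

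First, I would write down the generator of the Markov chain $(\Pi_t, N(t))$ and apply it to the function $(\Pi, N) \mapsto N_x/N$. Using $\Delta(N_x/N) = \Delta N_x / N$ when $\Delta N = 0$ (a random walk step) and $\Delta(N_x/N) = (N\Delta N_x - N_x)/[N(N+1)]$ when $\Delta N = 1$ (a branching event), and summing over the various jump types with their rates (namely $N_y/(2d)$ per direction for a step from $y$, and $(\lambda + (\lambda_0-\lambda)\mathbf{1}_{y=0}) N_y$ for a branching at $y$), one obtains
\[d\widehat N_x(t) = D_x(t)\, dt + dM_x(t),\]
where $M_x$ is a local martingale and
\[D_x(t) = \frac{1}{2d}\sum_{i=1}^d \bigl(\widehat N_{x+e_i}(t) + \widehat N_{x-e_i}(t)\bigr) - \widehat N_x(t) + \frac{N(t)}{N(t)+1}\,(\lambda_0-\lambda)\bigl(\mathbf{1}_{x=0} - \widehat N_0(t)\bigr)\widehat N_x(t).\]

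Next, I would control $M_x$. Each jump of the underlying process changes $\widehat N_x$ by $O(1/N(t))$ while the total instantaneous jump rate is $O(N(t))$, so $d\langle M_x\rangle_t = O(1/N(t))\, dt$. Since branching happens at rate at least $\lambda>0$ everywhere, $N(t)$ dominates a rate-$\lambda$ Yule process, so $\liminf_{t\to\infty} e^{-\lambda t} N(t) > 0$ almost surely, and hence $\int_0^\infty N(s)^{-1}\,ds < \infty$ a.s. It follows that $\langle M_x\rangle_\infty < \infty$ a.s., and (after a standard localisation) $M_x(t)$ converges almost surely to a finite limit as $t\to\infty$.

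Finally, rewriting the decomposition as $\int_0^t D_x(s)\,ds = \widehat N_x(t) - \widehat N_x(0) - M_x(t)$ and using that $\widehat\Pi_t \to \nu$ a.s.\ implies $\widehat N_y(t) \to \nu_y$ for every fixed $y$ (the topology on $\mathbb Z^d$ is discrete), one concludes that $\int_0^t D_x(s)\,ds$ converges almost surely to a finite limit. On the other hand, the integrand $D_x(s)$ itself converges a.s.\ to
\[\tilde D_x := \frac{1}{2d}\sum_{i=1}^d (\nu_{x+e_i}+\nu_{x-e_i}) - \nu_x + (\lambda_0-\lambda)(\mathbf{1}_{x=0}-\nu_0)\nu_x;\]
a function with a finite limit at infinity whose integral converges must tend to $0$, so $\tilde D_x = 0$, which after rearrangement is exactly~\eqref{eq:balance1} for $x \neq 0$ and~\eqref{eq:balance2} for $x=0$. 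The main obstacle is the martingale estimate in the third paragraph: establishing the exponential lower bound on $N(t)$ and using it (via localisation of the local martingale with summable predictable quadratic variation) to upgrade $M_x$ to one that truly converges almost surely.
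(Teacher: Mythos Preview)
Your argument is correct and the drift computation, the quadratic-variation bound, and the final ``convergent integral with convergent integrand forces the limit to vanish'' step all check out. It is, however, a genuinely different route from the paper's. The paper does not decompose $\widehat N_x$ directly; instead it first builds the martingale $M_t = N_t\,\exp\bigl(-\int_0^t \rho_s\,\mathrm ds\bigr)$ with $\rho_s=\lambda+(\lambda_0-\lambda)\widehat\Pi_s(0)$, shows it is bounded in $L^2$, and uses this together with the Markov (semigroup) property to prove that $\mathbb{E}[\Pi_t\cdot\nu]\,\mathrm e^{-\rho t}=\nu$ for all $t$, where $\rho=\lambda+(\lambda_0-\lambda)\nu_0$. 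The balance equations then drop out by differentiating this eigen-identity at $t=0$. That argument requires a dominated-convergence justification (with Gaussian random-walk tail bounds) to swap a sum over $\mathbb Z^d$ with a limit.

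Your approach is more direct and arguably more elementary for the purpose of Proposition~\ref{prop:balance}: it stays pathwise, avoids the $L^2$ martingale and the semigroup identity altogether, and relies only on the a.s.\ exponential growth of $N(t)$ (via domination by a rate-$\lambda$ Yule process) to make $\langle M_x\rangle_\infty<\infty$. What the paper's route buys in exchange is the martingale $M_t$ itself and the eigen-equation $\mathbb{E}[\Pi_t\cdot\nu]=\mathrm e^{\rho t}\nu$, which are of independent interest (e.g.\ for replacing the random normalisation $N(t)$ by a deterministic one, as the paper notes).
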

In fact,~\eqref{eq:balance2} is redundant. Indeed, by summing over $x\neq 0$ the left and right hand sides of the equalities in~\eqref{eq:balance1}, 
it is readily seen that any probability measure satisfying~\eqref{eq:balance1} 
also satisfies~\eqref{eq:balance2}. 
Our next result identifies a phase transition for the existence of solutions to the balance equations~\eqref{eq:balance1} and~\eqref{eq:balance2}, 
which we conjecture to coincide with the transition between a localisation and a delocalisation regimes.

\begin{theorem}\label{th:no_loc} Let $\gamma_d$ be the probability that a random walk in $\mathbb Z^d$ started at zero never returns to zero.
\begin{itemize}
\item[(i)] If  $0\le \lambda_0-\lambda\le \gamma_d$, then there exist no probability measure satisfying \eqref{eq:balance1}.
\item[(ii)] If $\lambda_0-\lambda> \gamma_d$, then there exists a unique probability measure satisfying \eqref{eq:balance1}.
\end{itemize}
\end{theorem}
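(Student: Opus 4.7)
The plan is to reduce the balance equation~\eqref{eq:balance1} to a single scalar equation in one unknown and then analyse that equation using standard potential theory for the simple random walk on $\mathbb{Z}^d$. Let $P$ denote the transition operator of simple random walk, and suppose $\nu$ is a probability measure satisfying~\eqref{eq:balance1}. Set $\alpha := (\lambda_0-\lambda)\nu_0 \geq 0$ and $s := 1/(1+\alpha) \in (0,1]$; then~\eqref{eq:balance1} becomes the eigenequation
\[P\nu(x) = (1+\alpha)\,\nu(x) \qquad (x\neq 0).\]
Assuming for now $\nu_0>0$ (hence $s<1$), I would first show that every $\ell^1$ solution of this eigenequation with prescribed value $\nu_0$ at the origin must be of the form
\[\nu_x = \nu_0 \, E_x\bigl[s^{T_0}\bigr],\]
where $T_0$ is the first hitting time of $0$ for a simple random walk started at $x$. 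That any function of this form is a solution is a standard first-step calculation. For the converse, I would iterate the relation $\nu(x)=s\,P\nu(x)$ off $0$, obtaining
\[\nu_x - \nu_0\,E_x\bigl[s^{T_0}\bigr] \;=\; s^n\, E_x\!\left[\bigl(\nu_\cdot - \nu_0 E_\cdot[s^{T_0}]\bigr)(Y_n)\,\mathbf 1\{T_0>n\}\right],\]
and bound the right-hand side by $s^n \|\nu\|_{\ell^1}$, which vanishes as $n\to\infty$.

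Next I would plug this representation into the normalisation $\sum_x \nu_x=1$. Using the classical identity $E_x[s^{T_0}] = G_s(x,0)/G_s(0,0)$, where $G_s(x,y)=\sum_{n\geq 0} s^n p_n(x,y)$ is the $s$-potential, together with $\sum_x G_s(x,0)=1/(1-s)$ (by symmetry of $P$ and Fubini), the normalisation condition reads $\nu_0 = (1-s)\,G_s(0,0)$. Combining this with $\alpha=(\lambda_0-\lambda)\nu_0$ and $1-s = \alpha s$, the whole system collapses to the scalar equation
\[F(s) \;:=\; s\,G_s(0,0) \;=\; \frac{1}{\lambda_0-\lambda}.\]

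The last step is to analyse $F$. As a power series with positive coefficients it is continuous and strictly increasing on $(0,1)$, with $F(0^+)=0$ and $\lim_{s\to 1^-}F(s) = G(0,0) = 1/\gamma_d$ (with the convention $1/0 = +\infty$ in the recurrent cases $d\leq 2$, which follows from the standard identity $G(0,0) = 1/P_0(T_0^+=\infty)$). Hence $F$ is a continuous bijection from $(0,1)$ onto $(0,1/\gamma_d)$, and the equation $F(s)=1/(\lambda_0-\lambda)$ admits a (necessarily unique) solution $s\in(0,1)$ precisely when $\lambda_0-\lambda>\gamma_d$. For such $s$, setting $\nu_0 := (1-s)G_s(0,0)$ and $\nu_x := \nu_0\,E_x[s^{T_0}]$ gives a probability measure satisfying~\eqref{eq:balance1} by a direct first-step verification; conversely, the reduction above shows that this is the only candidate. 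Finally, the case $\nu_0=0$ (i.e.\ $s=1$) must be excluded separately: then $\nu$ is nonnegative, $P$-harmonic off $0$, vanishes at $0$, and lies in $\ell^1$; an optional-stopping argument applied to the bounded martingale $\nu(Y_{n\wedge T_0})$, together with $\nu_x\to 0$ at infinity, forces $\nu\equiv 0$.

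The main technical obstacle, I expect, is the uniqueness argument in the first step: the eigenequation $P\nu = (1+\alpha)\nu$ on $\mathbb{Z}^d\setminus\{0\}$ has many solutions in general, including eigenfunctions that grow at infinity. The summability $\nu\in\ell^1(\mathbb{Z}^d)$ (a consequence of $\nu$ being a probability measure) is exactly what kills those parasitic modes through the $s^n\|\nu\|_{\ell^1}$ bound above. Once this representation is secured, the rest of the argument is either standard potential theory for simple random walk or algebraic manipulation.
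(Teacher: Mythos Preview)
Your argument is correct and follows essentially the same route as the paper's. The paper also shows $\nu_0>0$, derives the representation $\nu_x=\nu_0\,E_x[s^{\tau_0}]$ with $s=1/(1+(\lambda_0-\lambda)\nu_0)$ (via optional stopping for the bounded martingale $\nu_{S_{n\wedge\tau_0}}/(1+\alpha)^{n\wedge\tau_0}$ rather than your iteration, but these are equivalent), and then sums over $x$ to obtain a scalar equation; its function $f(u)=1+u-E[(1+u)^{-(\tau_0-1)}]$ in the variable $u=(\lambda_0-\lambda)\nu_0$ is exactly $1/F(s)$ under $s=1/(1+u)$, so the two analyses of the scalar equation are the same computation in different coordinates. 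The only cosmetic differences are that the paper sums via a time-reversal identity for $\mathbb{P}(\tau_0\ge k+1)$ where you use the Green's function identity $\sum_x G_s(x,0)=1/(1-s)$, and that the paper dispatches part~(i) by a quick lower bound $\mathbb{P}(\tau_0\ge k+1)\ge\gamma_d$ before doing the full monotonicity analysis for part~(ii), whereas you treat both parts at once via the bijectivity of $F$.
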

In other words, we prove that, ({\sc a}) if $\lambda_0>2d-1+2d\lambda$, then the occupation measure of the branching random walk ``localises'' (see Theorem~\ref{th:main}), and ({\sc b}) if $0\le \lambda_0-\lambda\le \gamma_d$, then localisation cannot occur in the strong sense of ({\sc a}) (see Theorem~\ref{th:no_loc}$(i)$). We stress that the value of the threshold for the transition observed in Theorem~\ref{th:no_loc}, namely the constant $\gamma_d$, is exactly the same as in the case $\lambda=0$, see below. 
However, while the phase transition is elementary to identify in the latter case, using a direct comparison between the total number of particles ever visiting the origin and a standard Galton-Watson process (see Remark~\ref{rem:subcritical} below), it seems less immediate to use such a comparison when $\lambda$ is positive. 
Actually, it may even seem counter-intuitive that the threshold for the phase transition between localisation and delocalisation holds at the same critical value (for the parameter $\lambda_0-\lambda$), no matter the value of $\lambda$.
Indeed, one could expect that increasing $\lambda$ would favour the natural diffusive behaviour of the process, while our results tend to show that, on the contrary, it does not play any role in this respect. 

\medskip
Our last main result shows a weak form of localisation, as soon as $\lambda_0>\lambda+1$, in the sense that almost surely at all large times, a positive proportion of all particles stands at the origin. It also shows that when $\lambda_0<\lambda + \gamma_d$, almost surely the proportion of particles that stand at the origin (or in any given finite set) converges to zero as time goes to infinity. 

From now on, in a slight abuse of notation, we write $\widehat \Pi_t(x)$ instead of $\widehat\Pi_t(\{x\})$, for all $x\in\mathbb Z^d$.

\begin{theorem} \label{prop:weakloc}
\begin{enumerate}
\item[(i)] If $\lambda_0>1+\lambda$, then, almost surely, 
\[\liminf_{t\to \infty} \ \widehat {\Pi}_t(0) \ge \frac{\lambda_0 - \lambda - 1}{\lambda_0 -\lambda}.\]
\item[(ii)] If $\lambda <  \lambda_0 <\lambda + \gamma_d$, then for any finite $B\subset \mathbb Z^d$, almost surely, 
$$\lim_{t\to \infty} \widehat \Pi_t(B) =0. $$ 
\end{enumerate}
\end{theorem}
In addition to being interesting in itself, the first part of this theorem is used as a preliminary step in the proof of Theorem~\ref{th:main}, see Remark~\ref{remark:A'1}.
Concerning the second part, we note that when $\lambda_0 = \lambda$, it is well-known\footnote{
We were unable to find a reference where this is done in our exact setting, but one can apply standard martingale techniques as in~\cite{Biggins92} to get this result.} that, almost surely as $t\uparrow\infty$, 
\[\widehat\Pi_t\big(\cdot\sqrt{t}\big) \to \mathcal N(0, \text{Id}),\]
for the topology of weak convergence on the space of probability measures on $\mathbb R^d$, where $\text{Id}$ stands for the identity matrix. 
Based on first moment estimates, we conjecture that such a strong result should persist in the whole subcritical phase, i.e.~for any $\lambda\leq \lambda_0 < \lambda + \gamma_d$.

\begin{remark}
Several of the assumptions we make on the catalytic branching random walk are, 
we believe, not necessary (and are not generally made in the literature on the case $\lambda = 0$).
For example, the assumption that, at time zero, there is a unique particle and this particle sits at the origin could be replaced by having a finite number of particles alive at time zero, each of them at any site in $\mathbb Z^d$. 
This is because of the irreducibility of the process.
Also, we believe that the binary branching could be replaced by any supercritical branching mechanism (even with a random number of offspring).
Finally, the particles could move according to a Markov jump process instead of a continuous time simple symmetric random walk, as assumed here.
Our proofs should be robust under all these modifications, although the proof of Theorem~\ref{th:main} might become more intricate, in particular in the case of a more general underlying Markov jump process. This is the reason why we decided to keep our framework as simple as possible.
\end{remark}

\subsection{Discussion of the proofs (and plan of the paper)}
The proofs of Theorem~\ref{prop:weakloc} (i) and (ii) (see Section~\ref{sec:new}) rely on two different ideas. 
For the proof of the weak localisation result, we first show that the number of particles at the origin grows exponentially fast almost surely. For this we couple our catalytic branching random walk with the classical branching random walk (corresponding to the case $\lambda = \lambda_0$) and use a second moment method. We then use the excess of the branching rate at the origin and Borel-Cantelli type arguments, to bootstrap this estimate, and show that the proportion of particles at the origin remains bounded from below over time by a positive constant.
The proof of the weak delocalisation result relies on a first moment method and on a direct link between the first moment in the case $\lambda_0>\lambda$ and the first moment in the case when branching only occurs at the origin, at rate $\lambda_0-\lambda$, see Proposition~\ref{prop:exp}.

The proof of Proposition~\ref{prop:balance} (see Section~\ref{sec:balance}) relies on (1) finding a good approximation for the total number of particles alive at large times $t$ (see Lemma~\ref{lem:balance1} - this is done by finding a martingale and proving that it converges in $L^2$), which allows to replace the random normalisation of the occupation measure by a deterministic one (see Lemma~\ref{lem:balance2}), and then (2) writing a forward evolution equation for the occupation measure and studying their equilibrium.

The proof of Theorem~\ref{th:no_loc} (see Section~\ref{sec:noloc}) is done by studying the existence of solutions (or lack of thereof) to the balance equations of Proposition~\ref{prop:balance}.
To do this, we use the optional stopping theorem applied to a martingale that naturally arises from the balance equations.

The proof of localisation (see Section~\ref{sec:mvpp}), i.e.\ Theorem~\ref{th:main}, relies on proving that the occupation measure taken at the times when either a branching or a jumping event occurs is a measure-valued P\'olya process ({\sc mvpp}).
{\sc Mvpp}s were introduced in Bandyopadhyay and Thacker~\cite{BT16} and Mailler and Marckert~\cite{MM17}  (see also~\cite{BT17} where the model was first defined on a particular example) as a generalisation of P\'olya urns to infinitely-many colours.
Mailler and Marckert~\cite{MM17} proved convergence in probability of a large class of {\sc mvpp}s under an assumption of balance (in terms of urns, the balance assumption requires that the total number of balls added at any time-step in the urn is constant and deterministic). 
They also required that the replacement rule is deterministic, but this assumption was later removed by Janson~\cite{J19}.
To prove Theorem~\ref{th:main}, we use convergence results proved by Mailler and Villemonais~\cite{MV20}, which apply without the balance assumption, for random replacements, and hold almost surely.
Besides showing that our model is an {\sc mvpp}, our main contribution is to check that our {\sc mvpp} satisfies the assumptions of~\cite{MV20}; to do this, we need to show that a continuous-time sub-Markovian process admits a quasi-stationary distribution, and this is done using results of Champagnat and Villemonais~\cite{CV23}.
Interestingly, this quasi-stationary distribution is the limit $\nu$ in Theorem~\ref{th:main}, and our Proposition~\ref{prop:balance} provides a characterisation of this measure for $\lambda_0$ large enough.

As already discussed, we believe that the condition $\lambda_0>2d-1+2d\lambda$ is far from optimal (and conjecture that there is localisation as soon as $\lambda_0 >\lambda+\gamma_d$). 
The reason for this is that the proof of Theorem~\ref{th:main} relies on~\cite[Theorem~1]{MV20} and~\cite[Theorem~5.1]{CV23}, 
both of which are of the form ``sufficient assumptions imply result'' as opposed to ``necessary and sufficient assumptions imply result''.
To improve our condition on $\lambda_0$, one would have to come up with an ad-hoc proof of these two theorems for the catalytic branching random walk, or come up with a completely different proof that does not involve MVPPs.

\subsection{Discussion of the literature}\label{sec:discussion}

\textbf{The catalytic branching random walk.} 
The case when branching only happens at the origin (thus $\lambda_0>\lambda = 0$) 
has been long studied in the literature, under the name of catalytic branching random walk.
As far as we know, 
most of the results concerning the localisation/delocalisation of the process are about the asymptotic 
behaviour of the moments of the number of particles at each site as time goes to infinity.

The first papers on the catalytic branching random walk focused on particles performing simple random walks on $\mathbb Z^d$: see, e.g., Albeverio, Bogachev and Yarovaya~\cite{ABY98}.
Efforts were later made to generalise these results allowing the particles to perform more general Markovian trajectories (between branching events). Several methods were used to analyse the aforementioned moments: Bellman-Harris branching processes in~\cite{TV03, VT05, B10, B11}, operator theory in~\cite{Y10}, renewal theory in~\cite{HVT10}, and spine decomposition techniques in D\"oring and Roberts~\cite{DR}.

More recently, some of the focus has shifted to understanding the spread of the catalytic branching random walk at large times see, e.g.~\cite{CH14, MY12, B18, B20}. 
To the best of our knowledge, this interesting question is open in our setting (i.e.\ with branching outside the origin).

Most of these papers consider a more general branching mechanism than our binary fission: the offspring distribution can be any distribution with finite second moment. They also consider the case when the offspring distribution is (sub-)critical (i.e.\ the average number of offspring is less than or equal to~$1$). 
In this paper, we focus on a particular super-critical case. However, we allow arbitrary rates $\lambda$ and~$\lambda_0$ for the branching events, which, at least phenomenologically, plays a similar role as allowing a general average number of offspring.

To compare the aforementioned results with ours, we now summarise the results of, e.g.\ \cite{ABY98} (see also~\cite{DR} for similar results in a slightly different model). 
For all $y\in\mathbb Z^d$, $t\geq 0$, and $k\geq 1$, we let $M^k(t, y)$ be the $k$-th moment of the number of particles at position $y$ at time $t$, and $M^k(t)$ be the $k$-th moment of the total number of particles in the system at time~$t$.
Albeverio, Bogachev and Yarovaya~\cite{ABY98} prove that:
\begin{itemize}
\item If $\lambda_0<\gamma_d$, then, there exist positive constants $C, (C(y))_{y\in\mathbb Z^d}$ such that,
for all $y\in\mathbb Z^d$, as $t\to+\infty$,
\begin{equation}\label{eq:ABY}
M^1(t, y)=(C(y)+o(1)) t^{-\nicefrac d2}
\quad\text{ and }\quad
M^1(t) = C + o(1),
\end{equation}
which corresponds to a non-localisation phase as most particles in the system have drifted away from the origin.
\item If $\lambda_0>\gamma_d$, then, there exist positive constants $\rho>0$, $C$, $(C(y))_{y\in\mathbb Z^d}$ such that, for all $y\in\mathbb Z^d$, for all $k\geq 1$, as $t\to+\infty$,
\[M^{k}(t, y) = (C(y)+o(1)) \mathrm e^{k\rho t} \quad
\text{ and }\quad
M^{k}(t)=(C+o(1)) \mathrm e^{k\rho t},
\]
which corresponds to a localisation phase, as the number of particles at any site $y$ is of the same order as the total number of particles in the system. 
\end{itemize}
Note that the critical case, when $\lambda_0 = \gamma_d$, is also studied in~\cite{ABY98}, and the growth rate of the number of particles in this case is sub-exponential (see also Vatutin and Topchii~\cite{VT11}).

We believe that, for most of our results, the assumption that $\lambda$ is positive could be dropped. 
In particular, when $\lambda = 0$, the convergence result of Theorem~\ref{th:main} should hold almost surely on the event that the number of particles grows to infinity. 
The main challenge would be to prove an analogue of Lemma~\ref{lem:2.1} below, 
showing that on the event of non-local extinction, 
the number of particles at the origin grows almost surely exponentially fast. 
While it could very well be that our proof of Lemma~\ref{lem:2.1} could be adapted to this case, using in particular the asymptotics of the first two moments of $\Pi_t(0)$ (proved in~\cite{ABY98}), we have not tried to pursue in this direction.

\medskip
{\bf Generalisations of the catalytic branching random walk:}
Since its introduction, the catalytic branching random walk (with branching only at the origin) has been generalised in several ways. One direction of generalisation is to add more catalysts (sites at which branching happens): instead of having one catalyst at the origin, one can have $N$ catalysts at positions $x_1, \ldots, x_N$ (see, e.g. Albeverio and Bogachev~\cite{AB00}).
Another direction is to allow the catalysts to move following their own Markovian motion and branching only happens for particles at the same position as one of the catalysts, {with a rate that may depend on the number of catalysts there (see, e.g.~\cite{GH06, KS03, CGM12}). 
{Finally let us mention the model of inhomogeneous branching random walk, where the branching rate depends on the position of the particle, as e.g.~in~\cite{BH14a,GKW99}.}

\medskip
{\bf Multi-type Galton-Watson processes:} A natural discrete-time version of our process is the following. Start with one particle at the origin say, at time $0$. Then given two probability measures $\mu_0$ and $\mu$, at each unit of time, particles alive at that time give birth to a number of offspring distributed respectively as $\mu_0$ if at the origin or as $\mu$ if not at the origin, and die immediately after that, whilst new born particles immediately jump at random to a nearest neighbor. Interpreting the spatial positions of particles as their types, we can also view this process as a multi-type Galton-Watson process. There are now a number of deep results regarding the asymptotic behavior of the empirical distribution of types for such processes, see in particular~\cite{AD25} and~\cite{BBC25}, and references therein. It would be interesting to investigate if for such process, a sharp phase transition could be derived, depending on the means of $\mu_0$ and $\mu$. 

\medskip
{\bf Catalytic branching Brownian motion/superprocesses, mutually catalytic processes:}
Various models of catalytic branching Brownian motions or superprocesses have also been considered. 
The case of inhomogeneous branching Brownian motion, where the branching rate depends continuously on the position, has received quite some attention, starting with Watanabe~\cite{W67}, who proves a similar result as our Theorem~\ref{th:main}, 
yet with a different (and deterministic) normalisation and a random limit.
More recent results concern the position of the right-most particle in dimension one (see, e.g., Lalley and Sellke~\cite{LS88,LS89}).  
The question of localisation as well as other questions are also addressed in a model of catalytic branching Brownian motion where branching only occurs at the origin in Bocharov and Harris~\cite{BH14,BH16}. 
Finally, catalytic or mutually catalytic superprocesses have been extensively studied, e.g.~in~\cite{CDG04,DF94,FLG95}.

\medskip
{{\bf Branching random walks in random environment and parabolic Anderson model:} 
To conclude this discussion, 
let us briefly mention the model of branching random walks in random environment and their mean-field counterpart called the parabolic Anderson model.
In this model, first introduced by G{\"a}rtner and Molchanov~\cite{GM90} one considers particles that move according to a continuous time simple random walk on $\mathbb Z^d$ and branch at rate $\xi_x$ when they are at position $x$, where the sequence $(\xi_x)_{x\in\mathbb Z^d}$ is a sequence of i.i.d.\ random variables called potentials.
It is conjectured that, in this model, the occupation measure at large times is concentrated in a few ``peaks'' (this phenomenon is called ``intermittency''). 
This is a difficult problem, and only partial progress has been made towards solving it so far: one important result of Ortgiese and Roberts~\cite{OR17} is that, when the potentials are Pareto-distributed, there is one peak (this phenomenon is called ``strong intermittency'').
The mean-field version of this branching random walk in random environment is the parabolic Anderson model, which is much better understood. In particular, in this setting a phenomenon of strong intermittency also arises (see, e.g., G\"artner and den Hollander~\cite{GdH06},  K\"onig, Lacoin, M\"orters and Sidorova~\cite{KLMS09}, Fiodorov and Muirhead~\cite{FM14}).}


\section{Proof of Theorem~\ref{prop:weakloc}}\label{sec:new}
In this section we prove Theorem~\ref{prop:weakloc}. Recall that the first part states a weak localisation result, namely that almost surely a positive proportion of the particles stands at the origin at all large times, when $\lambda_0>\lambda + 1$, while the second part proves a weak form of delocalisation in the whole subcritical phase, namely that for all $\lambda < \lambda_0<\lambda + \gamma_d$, almost surely the proportion of particles that stand in any finite set converges to zero as time goes to infinity. We prove the first part in Subsection~\ref{subsec:weakloc} and the second part in Subsection~\ref{subsec:weakdeloc}.

\subsection{Proof of Part (i) (weak localisation result)}\label{subsec:weakloc}
We start with a basic lemma ensuring that almost surely the number of particles at the origin grows sufficiently fast to infinity. 
\begin{lemma} \label{lem:2.1}
Assume $\lambda_0\ge \lambda>0$. 
Then almost surely, for any $\alpha \in [0,\lambda)$, 
\[\lim_{t\to \infty} \mathrm e^{-\alpha t} \cdot \Pi_t(0) = \infty.\]
\end{lemma}
\begin{proof}
Let $\Gamma_t$ be the occupation measure of the branching random walk in which all particles jump at rate~1 and branch at rate~$\lambda$ (in other words, $\Gamma_t$ is $\Pi_t$ in the case when $\lambda = \lambda_0$). One can couple both branching random walks such that $\Gamma_t(0)\leq \Pi_t(0)$ almost surely for all $t\geq 0$ (indeed, the extra branching rate at zero can only increase the number of particles at each state).
It is thus enough to prove the result in the case $\lambda = \lambda_0$, which we assume now. 
Although we believe this result is folklore, we cannot find an exact reference (see Revesz~\cite[Theorem~4.5]{Revesz} for a local limit theorem on the analogous discrete-time branching random walk) and thus provide a proof for completeness' sake. We prove this by a second moment method: first note that
\[\mathbb E[\Pi_t(0)] 
= \mathbb E\bigg[\sum_{i=1}^{N_t} {\bf 1}_{X_i(t) = 0}\bigg]
= \mathbb E[N_t] \cdot \mathbb P(X(t) = 0),\]
where $(X(t))_{t\geq 0}$ is a simple symmetric random walk on $\mathbb Z^d$ with jump rate~$1$, and the $(X_i(t))_{i\le N_t}$ are the positions of the particles of our branching random walk at time $t$.
This is true because, when $\lambda_0 = \lambda$, $(N_t)_{t\geq 0}$ is a Yule process of parameter $\lambda$, independent of each $X_i(t)$, for any fixed $i\le N_t$. Thus,
\begin{equation*}
\mathbb E[\Pi_t(0)] = \mathrm e^{\lambda t} \cdot \mathbb P(X(t)=0).
\end{equation*}
Using the local central limit theorem (see, e.g., \cite[Theorem~2.5.6]{LL10} for the one-dimensional case, which implies the $d$-dimensional case as the $d$ coordinates are independent), 
we then get, for some constant $c>0$, 
\begin{equation*}
\mathbb E[\Pi_t(0)] \ge \frac{c\cdot \mathrm  e^{\lambda t}}{t^{d/2}}. 
\end{equation*}
For the second moment, we simply use that $N_t$ is a geometric random variable with parameter $\mathrm e^{-\lambda t}$ (see~\cite[Section~III.5]{AK}), implying that $\mathbb E[N_t^2] = (2+o(1)) \mathrm e^{2\lambda t}$, as $t\to \infty$, and consequently also  
\begin{equation}\label{eq:Ntsquare}
\mathbb E[\Pi_t(0)^2] \le \mathbb E[N_t^2] \le 3 \mathrm  e^{2\lambda t}, 
\end{equation}
for all $t$ large enough.  
Fix $\alpha<\lambda$. Applying Paley-Zygmund's inequality, we deduce from the previous bounds that,
 for all $t$ large enough, for some possibly smaller constant $c>0$, 
\[\mathbb P(\Pi_t(0)\ge \mathrm e^{\alpha t}) \ge \frac{c}{t^d}.\]
In fact a similar reasoning shows that, for all $t$ large enough, 
\begin{equation}\label{eq.Pit.1}
\inf_{\|x\|\le t^{1/4}} \mathbb P(\Pi_t(x)\ge \mathrm  e^{\alpha t} ) \ge \frac{c}{t^d}, 
\end{equation}
since we also have $\inf_{\|x\|\le t^{1/4}} \mathbb P(X(t)=x)\ge c/t^{d/2}$ (again, see \cite[Theorem~2.5.6]{LL10}). 
Recall next that for some positive constants $c'$ and $C$, for all $T>0$, 
\[\mathbb P( \|X(T)\| \ge T) \le  C\exp(-c'T).\] 
Hence Markov's inequality yields
\begin{align}
\mathbb P\Big(\sum_{\|x\|\le t^{1/4}} \Pi_{t^{1/4}}(x) \ge \frac 12 N_{t^{1/4}}\Big) 
& = \mathbb P\Big(\big|\{i\le N_{t^{1/4}} : \|X_i(t^{1/4})\|\le t^{1/4}\}\big|\ge \frac 12 N_{t^{1/4}}\Big)   \notag\\
& \ge 1 -C \exp(-c't^{1/4}), \label{eq.Pit.2} 
\end{align} 
Combining~\eqref{eq.Pit.1} and~\eqref{eq.Pit.2} yields, by first conditioning on the positions of the particles at time $t^{1/4}$, that for all $t$ large enough, 
\begin{align*}
 \mathbb P(\Pi_t(0)\ge e^{\alpha (t-t^{1/4})} )&  \ge 1 - C\exp(-c't^{1/4})- \mathbb E\Big[(1-\frac{c}{t^d})^{\frac 12 N_{t^{1/4}}}\Big]  \\
 & \ge 1 - 2C\exp(-\kappa\cdot t^{1/4}),
 \end{align*}
for some other constant $\kappa>0$, 
Then using Borel-Cantelli's lemma, we deduce that for any $\alpha'<\alpha$, almost surely, 
$$ \liminf_{n\to \infty} \ \Pi_n(0) \cdot \mathrm e^{-\alpha' n} = +\infty.$$
Note also that any particle that stands at the origin at some time $n$ will stay there until time $n+1$ 
with probability $1/\mathrm e$. 
Hence, by standard large deviations estimates and Borel-Cantelli lemma again, we get that, almost surely for any $\alpha''<\alpha'$, 
\[\liminf_{t\to \infty} \ \Pi_t(0) \cdot \mathrm e^{-\alpha'' t} = +\infty.\]
The fact that this holds for any $\alpha''<\alpha'<\alpha <\lambda$ implies the result.  
\end{proof}

We next record a simple fact:
\begin{lemma}\label{lem:2.2}
Assume $\lambda_0>1$ and let $\delta\in (0,\lambda_0- 1)$. There exists $s_0>0$, such that for any $s\in [0,s_0]$, 
$$\mathbb E[\Pi_s(0)] \ge \mathrm e^{(\lambda_0-1-\delta)s}. $$ 
\end{lemma}
\begin{proof} 
The result directly follows from the observation that, by definition of our catalytic branching random walk as a Markov jump process starting with one particle at the origin, as $s\to 0$,
\[\mathbb E[\Pi_s(0)] = 1+ (\lambda_0-1) s + \mathcal O(s^2).\qedhere\]
\end{proof}
As a corollary of the two previous results one can show the following: 
\begin{lemma}\label{lem:2.3}
Assume $\lambda_0>1$ and let $\delta \in (0,\lambda_0-1)$. Then, with $s_0$ as in Lemma~\ref{lem:2.2}, for all $s\in [0,s_0]$, almost surely, 
\[\liminf_{n\to \infty} \frac{\Pi_{(n+1)s}(0)}{\Pi_{ns}(0)} \ge \mathrm e^{(\lambda_0 - 1-\delta)s},\]
and 
\[\liminf_{n\to \infty}  \inf_{u\in [0,s]} \frac{\Pi_{ns+u}(0)}{\Pi_{ns}(0)} \ge 1 - s. \]
\end{lemma}
\begin{proof}
Note that, for all $n\ge 1$ and $s\in [0,s_0]$,
\[\frac{\Pi_{(n+1)s}(0)}{\Pi_{ns}(0)} \ge \frac{1}{\Pi_{ns}(0)} \sum_{i=1}^{\Pi_{ns}(0)} \Pi_s^{(i)}(0),\]
where $(\Pi_s^{(i)}(0))_{i\ge 1}$ is a sequence independent copies of~$\Pi_s(0)$. 
Hence, by Chebyshev's inequality, we get that, for any $\varepsilon \in (0,1)$, for some constant $C>0$,  
\[\mathbb P\bigg(\frac{\Pi_{(n+1)s}(0)}{ \Pi_{ns}(0)} \le (1-\varepsilon) \cdot \mathbb E[\Pi_s(0)] \ \Big|\ \mathcal F_{ns}\bigg) \le \frac{C}{\Pi_{ns}(0)},\]
where $(\mathcal F_t)_{t\ge 0}$ is the natural filtration of the process $(\Pi_t)_{t\ge 0}$. Using now Lemma~\ref{lem:2.1}, and the conditional Borel-Cantelli lemma, this proves that for any fixed $\varepsilon	>0$, almost surely, 
\[\liminf_{n\to \infty} \frac{\Pi_{(n+1)s}(0)}{\Pi_{ns}(0)} \ge (1-\varepsilon) \cdot \mathbb E[\Pi_s(0)].\]
Together with Lemma~\ref{lem:2.2}, this proves the first part of the lemma. 
The second part is similar: We first note that, for any fixed $n\ge 1$, 
$$\inf_{u\in [0,s]} \frac{\Pi_{ns+u}(0)}{\Pi_{ns}(0)} \ge \frac{1}{\Pi_{ns}(0)} \sum_{i=1}^{\Pi_{ns}(0)} \inf_{u\in [0,s]} \Pi_u^{(i)}(0).$$
If the particle at the origin at time~0 does not jump before time $s$, which occurs with probability $\mathrm e^{-s}$, then $\inf_{u\in [0,s]} \Pi_u^{(i)}(0)\ge 1$. Therefore,
\[\mathbb E\Big[\inf_{u\in [0,s]} \Pi_u(0)\Big] \ge \mathrm e^{-s}\ge 1-s.\]
We conclude the proof with similar arguments as in the first part of the proof.
\end{proof}
Similarly as above, we can show the following fact:
\begin{lemma}\label{lem:2.4} 
Let $\delta>0$. There exists $s_1>0$ such that, for any $s\in  [0,s_1]$, almost surely, 
\[\limsup_{n\to \infty} \frac{N_{(n+1)s}}{N_{ns}}\mathrm e^{-[\lambda + \delta + (\lambda_0-\lambda) \widehat \Pi_{ns}(0)] \cdot s} \le 1.\]
\end{lemma}
\begin{proof}
We use that for any $s,t\ge 0$, 
$$\frac{N_{t+s}}{N_t} \le  \frac 1{N_t} \sum_{i=1}^{\Pi_t(0)} N^{(i)}_s + \frac 1{N_t}\sum_{i = 1}^{N_t - \Pi_t(0)} \widetilde N^{(i)}_s, $$ 
where $(N^{(i)}_s)_{i\ge 1}$ is a sequence of independent copies of $N_s$ 
and $(\widetilde N^{(i)}_s)_{i\ge 1}$ is a sequence of i.i.d.\ random variables distributed as the number of particles at time~$s$ in the process {which starts with one particle until an Exponential random time with mean $1/(1+\lambda)$, at which the particle either splits into two particles with probability $\lambda/(1+\lambda)$ or stays as it is otherwise, and then all particles just branch at rate $\lambda_0$ independently of each other}.  
The result follows by observing that $\mathbb E[N_s] = 1+ \lambda_0 s + \mathcal O(s^2)$, 
and $\mathbb E[\widetilde N^{(i)}_s] = 1+ \lambda s+\mathcal O(s^2)$, and then using the same argument as in the proof of Lemma~\ref{lem:2.2}.  
\end{proof}
We can now conclude the proof of the localisation part in Theorem~\ref{prop:weakloc}: 
\begin{proof}[Proof of Theorem~\ref{prop:weakloc} (i).]
Let $\delta \in (0,(\lambda_0 - \lambda - 1)/4)$, and let $s_0$ and $s_1$ be as in Lemma~\ref{lem:2.3} and~\ref{lem:2.4} respectively. Using these two lemmas, we infer that for any given $s \in [0,\min(s_0,s_1)]$, almost surely for all $n$ large enough, 
\[\frac{\widehat \Pi_{(n+1)s}(0)}{\widehat \Pi_{ns}(0)} \ge 1+ \Big(\lambda_0 -1 - \lambda - 3\delta - (\lambda_0 - \lambda)\widehat \Pi_{ns}(0)\Big) s  . \]
Hence, for any $n$ large enough, if 
\[\widehat \Pi_{ns}(0) \le \frac{\lambda_0 -1 - \lambda - 4\delta}{\lambda_0-\lambda}, \]
then $\widehat \Pi_{(n+1)s}(0)\ge (1 +\delta s) \cdot \widehat \Pi_{ns}(0)$. Together with the second part of Lemma~\ref{lem:2.3}, this shows that 
\[\liminf_{t\to \infty} \ \widehat \Pi_t(0) \ge (1-s)^2 \cdot \frac{\lambda_0 -1 - \lambda - 4\delta}{\lambda_0-\lambda}. \] 
Since this holds for any $\delta$ and $s$ small enough, we deduce that 
\[\liminf_{t\to \infty}\ \widehat \Pi_t(0) \ge \frac{\lambda_0 -1 - \lambda}{\lambda_0-\lambda},\]
as claimed.
\end{proof}


\subsection{Proof of Part (ii) (weak delocalisation result)}\label{subsec:weakdeloc}
Here we prove the second part of Theorem~\ref{prop:weakloc}. 
The proof is based on a first moment method. 
More precisely we relate the first moment of the occupation measure 
of any set to the same quantity in the model where branching only occurs 
at the origin at rate $\lambda_0-\lambda$, 
which is the model considered for instance in~\cite{ABY98}. 

\begin{proposition}\label{prop:exp}
Let $\lambda_0\geq \lambda>0$. 
Let $(\widetilde \Pi_t)_{t\ge 0}$ be the occupation measure of the process 
where particles only branch at the origin at rate $\lambda_0-\lambda$, 
and independently move as continuous time simple symmetric random walks with jump rate~$1$. 
Then for all $x\in \mathbb Z^d$ and $t\ge 0$, 
\[\mathbb E[\Pi_t(x)] = \mathrm e^{\lambda t} \cdot \mathbb E[\widetilde \Pi_t(x)]. \]
\end{proposition}
Note that this proposition holds for any $\lambda_0\ge \lambda$, in particular it holds as well in the critical regime ($\lambda_0=\lambda + \gamma_d$),  and in the super-critical regime $(\lambda_0>\lambda + \gamma_d$), and this may be used to provide new results in both cases, see Remark~\ref{rem:exp} below.

\begin{proof}[Proof of Proposition~\ref{prop:exp}]
For $t\geq 0$ and $x\in\mathbb Z^d$, let $u(x,t) = \mathbb E[\Pi_t(x)]$. 
By definition, one has that $u$ is $C^1$ in the second variable, and satisfies the partial differential equation: 
\[\frac{\partial u}{\partial t}   =  \Delta u + (\lambda  +  (\lambda_0-\lambda){\bf 1}_{\{x=0\}}) u,\]
where $\Delta$ is the discrete laplacian 
defined by 
$$\Delta f (x) = \frac{ 1}{2d} \sum_{i=1}^d (f(x+ e_i)+f(x-e_i) - 2f(x)),$$ 
for $f:\mathbb Z^d\to [0,+\infty)$, and $x\in\mathbb Z^d$ (recall that $(e_i)_{1\le i\le d}$ denotes the canonical basis of $\mathbb Z^d$). 
This implies that the function $v$ defined by $v(x,t) = \mathrm e^{-\lambda t} u(x,t)$ (for all $x\in\mathbb Z^d$ and $t\geq 0$) satisfies
\begin{equation}\label{eds.v}
\frac{\partial v}{\partial t}   =  \Delta v+ (\lambda_0 -\lambda){\bf 1}_{\{x=0\}} v.
\end{equation}
In other words, $v$ is solution of the same equation as the function $\widetilde v$ defined by $\widetilde v(x,t)= \mathbb E[\widetilde \Pi_t(x)]$, for all $x\in \mathbb Z^d$ and $t\ge 0$. 
Hence, it just remains to prove that regular solutions to~\eqref{eds.v} are unique. 
This is standard, and can be seen as follows in our case:
Let $v_1$ and $v_2$ be two solutions of \eqref{eds.v} that are $C^1$ in the second variable, square integrable in the first variable, and coincide at time $t=0$. Let 
$$h(x,t) = \exp(-(\lambda_0- \lambda)t) (v_1-v_2)(x,t),\quad \textrm{and}\quad  E(t) = \sum_{x\in \mathbb Z^d} h(x,t)^2.$$
Then a direct computation shows that 
\begin{align*}
\frac{\partial }{\partial t}E(t) & = 2\sum_{x\in \mathbb Z^d}  h(x,t)\cdot \frac{\partial}{\partial t} h(x,t) = 2\sum_{x\in \mathbb Z^d} h(x,t) \cdot \Big( \Delta h(x,t) + (\lambda_0-\lambda) h(x,t) ({\bf 1}_{\{x=0\}} - 1) \Big) \\
& \le 2 \sum_{x\in \mathbb Z^d} h(x,t) \Delta h(x,t). 
\end{align*}
Now note that, for any function $h: \mathbb Z^d \to \mathbb R$ 
such that $\sum_{x\in \mathbb Z^d} h(x)^2<\infty$, 
\begin{align*}
\sum_{x\in \mathbb Z^d} h(x) \Delta h(x) & = \frac 1{2d} \sum_{x\in \mathbb Z^d} \sum_{y\sim x} h(x) h(y) - \sum_{x\in\mathbb Z^d} h(x)^2\\
& \le \frac 1{2d} \sum_{x\in \mathbb Z^d} \sum_{y\sim x} \frac{h(x)^2 +  h(y)^2}{2} - \sum_{x\in\mathbb Z^d} h(x)^2 \le 0,
 \end{align*}
 where $y\sim x$ means that $x$ and $y$ are neighbors in $\mathbb Z^d$.
 Thus, $\frac{\partial }{\partial t}E(t) \le 0$, for all $t\ge 0$. 
Since, by definition, $E(0) = 0$ and $E(t)\ge 0$ for all $t\ge 0$, 
we deduce that $E(t) = 0$ for all $t\ge 0$, 
which concludes the proof of the uniqueness of solutions of~\eqref{eds.v}. 

Hence it only remains to show that both $v$ and $\widetilde v$ are square integrable in the first variable 
(the fact that they are $C^1$ in the second variable is standard). 
For this one can write that for any $t\ge 0$, with the notation from the proof of Lemma~\ref{lem:2.1}, 
\begin{align*} 
\sum_{x\in \mathbb Z^d} v^2(x,t) & = \mathrm e^{-2\lambda t} \cdot \sum_{x\in \mathbb Z^d} \mathbb E\bigg[\sum_{i=1}^{N_t} {\bf 1}_{X_i(t) = x}\bigg]^2  
\le \mathrm e^{-2\lambda t} \cdot\sum_{x\in \mathbb Z^d} \mathbb E\bigg[\bigg(\sum_{i=1}^{N_t} {\bf 1}_{X_i(t) = x}\bigg)^{\!\!2}\bigg] \\
& \le \mathrm e^{-2\lambda t} \cdot\sum_{x\in \mathbb Z^d} \mathbb E\bigg[N_t\sum_{i=1}^{N_t} {\bf 1}_{X_i(t) = x}\bigg] 
= \mathrm e^{-2\lambda t} \cdot \mathbb E[N_t^2] 
\le C \mathrm e^{2(\lambda_0 - \lambda)t}<\infty, 
\end{align*}
for some constant $C>0$, using for the last inequality that our catalytic branching process is stochastically dominated by a process branching at rate $\lambda_0$ everywhere, and the same bound as in~\eqref{eq:Ntsquare}. A similar argument works as well for $\widetilde v$, and 
this concludes the proof of the proposition. 
\end{proof}

We now resume the proof of Theorem~\ref{prop:weakloc} (ii). 
Note that we may assume $d\ge 3$, as otherwise $\gamma_d= 0$. 
{Using the same notation as in Proposition~\ref{prop:exp},
we let $\widetilde u(x,t) = \mathbb E[\widetilde \Pi_t(x)]$ 
and $\widetilde N_t$ be the number of particles at time~$t$ in the process that starts with one particle at the origin and only branches at the origin at rate $\lambda_0-\lambda$. 
Thanks to~\eqref{eq:ABY}, one has for some constant $C>0$, and all $t\ge 1$, 
\begin{equation}\label{eq:tilde.u}
\widetilde u(x,t) \le \frac{C}{t^{d/2}}.
\end{equation}
Together with Proposition~\ref{prop:exp}, we get  
\[u(x,t) \le \frac{C}{t^{d/2}} \cdot \mathrm e^{\lambda t}.\]
}%
Since we assumed $d\ge 3$, it follows from Borel-Cantelli lemma and Markov's inequality, that almost surely, 
\begin{equation}\label{subcritic0} 
\limsup_{n\to \infty} \ \mathrm e^{-\lambda n} \cdot \Pi_n(x) = 0,
\end{equation}
for all $x\in \mathbb Z^d$. We claim that we can deduce from this that almost surely, 
\begin{equation}\label{subcritic1} 
\limsup_{t\to \infty} \ \mathrm e^{-\lambda t} \cdot \Pi_t(x) = 0.
\end{equation}
Indeed, to see this, fix $x\in \mathbb Z^d$ and $\varepsilon>0$, and define inductively a sequence of stopping times $(\tau_k)_{k\ge 0}$ as follows. First $\tau_0= 0$, and then for $k\ge 0$, 
$$\tau_{k+1} = \inf \{t\ge \tau_k + 1 : \Pi_t(x) \ge \varepsilon \cdot \mathrm e^{\lambda t} \}. $$ 
Consider also for any $k\ge 0$, the event: 
$$A_k = \{\tau_k<\infty\}\cap \{\Pi_t(x) \ge \varepsilon \cdot \mathrm e^{\lambda t - 2}, \ \forall t\in [\tau_k,\tau_k+1]\}.$$ 
Since any particle alive at time $\tau_k$ at position $x$, will stay there until time $\tau_k +1$ with probability $1/e$, we have using standard large deviations results that for some constant $c>0$, for any $k\ge 0$, almost surely, 
$$\mathbb P(A_k\mid \mathcal F_{\tau_k}) {\bf 1}_{\{\tau_k<\infty\}} \ge c  {\bf 1}_{\{\tau_k<\infty\}}. $$ 
Thus, the conditional Borel-Cantelli lemma ensures that, almost surely on the event when $\tau_k<\infty$ for infinitely many $k$, one also has that 
$\Pi_n(x) \ge \varepsilon \mathrm e^{\lambda n - 2}$, for infinitely many $n$. 
In light of~\eqref{subcritic0}, we deduce that, almost surely, there exists $k$ such that $\tau_k = \infty$. 
Since this holds for any $\varepsilon>0$, this proves~\eqref{subcritic1}.  
Now note that the total number of particles is stochastically increasing in $\lambda_0$ and 
{if $\lambda_0= \lambda$, then $\mathrm e^{-\lambda t}N_t \to \xi$ almost surely as $t\to\infty$, 
where $\xi$ is a standard exponential random variable (see~\cite[Section~III.5]{AK}).}
This implies that, almost surely
\begin{equation}\label{subcritic2}
\liminf_{t\to \infty} \mathrm e^{-\lambda t}\cdot N_t >0.
\end{equation}
Combining~\eqref{subcritic1} and~\eqref{subcritic2} concludes the proof of of Theorem~\ref{prop:weakloc} (ii).

\begin{remark}\label{rem:subcritical}
In the proof of Theorem~\ref{prop:weakloc} (ii), we have used~\eqref{eq:ABY} to prove~\eqref{eq:tilde.u}.
Because the proof of ~\eqref{eq:ABY} is only sketched in~\cite{ABY98}, 
we provide here an alternate and simpler argument for~\eqref{eq:tilde.u}. 
First it is convenient to consider that in fact particles never die, and give birth to new particles at rate $\lambda_0-\lambda$ when sitting at the origin. 
In particular the times at which the initial particle gives birth to new particles forms a Poisson point process on $[0,\infty)$ with intensity given by 
\[\mathrm d\mu(t) = (\lambda_0-\lambda){\bf 1}_{X(t)= 0}\, \mathrm dt,\] 
where $X(t)$ is a simple symmetric random walk started at the origin and with jump rate~$1$.  Using Campbell's formula, this yields the recursive formula: 
\begin{equation}\label{rec.tildeu}
\widetilde u(x,t) 
= \mathbb E\bigg[\int_0^t \widetilde u(x,t-s)\,  \mathrm d\mu(s)\bigg]  + p_t(0,x) = (\lambda_0- \lambda) \int_0^t \widetilde u(x,t-s) p_s(0,0)\, \mathrm ds + p_t(0,x),
\end{equation}
where $p_s(0,x) = \mathbb P(X(s)= x)$. 
Integrating this equation from $t=0$ to infinity gives that, if $\lambda_0 <\lambda +\gamma_d$, then
\begin{equation}\label{int.tildeu}
\int_0^\infty \widetilde u(x,t) \, \mathrm dt =\frac{ G(x)}{1-m}<\infty,
\end{equation}
 where $G(x) = \int_0^\infty p_s(0,x)\, \mathrm ds $, and $m = (\lambda_0- \lambda)G(0)=\frac{\lambda_0- \lambda}{\gamma_d}<1$.

Let $C>0$ be some large constant to be fixed later. 
Define $h(x,t) = (1+t^{d/2}) \widetilde u(x,t)$, and consider $t_* = \inf \{s\ge 0 : h(x,s) > C\}$. Note that $h$ is a continuous function, and hence 
if $t_*$ is finite, then $h(x,t_*) = C$. We shall also use that by the local central limit theorem, one has for any $x\in \mathbb Z^d$, for some constant $c>0$, and all $s\ge 0$,
\begin{equation}\label{ps0x}
p_s(0,x) \le \frac{c}{1+s^{d/2}}. 
\end{equation} 
Assume that $t_*$ is finite. We then cut the integral appearing in~\eqref{rec.tildeu} into three pieces, corresponding to the time intervals $[0,\delta t_*]$, $[\delta t_*,(1-\delta)t_*]$ and $[(1-\delta)t_*,t_*]$, and bound the integral on each of them as follows.
First, by definition of $t_*$, we get 
$$\int_0^{\delta t_*} \widetilde u(x,t_*-s) p_s(0,0)\, \mathrm ds  
\le \frac{C}{1+[(1-\delta)t_*]^{d/2}} \cdot  \int_0^{\delta t_*} p_s(0,0)\, \mathrm ds 
\le \frac{CG(0)}{(1-\delta)^{d/2} (1+t_*^{d/2})}.$$ 
Next, we use~\eqref{ps0x} to obtain 
$$\int_{\delta t_*}^{(1-\delta)t_*} \widetilde u(x,t_*-s) p_s(0,0)\, \mathrm ds  \le  \frac{Cc t_*}{(1+(\delta t_*)^{d/2})^2} \le \frac{Cc t_*}{\delta^d (1+t_*^{d/2})^2},$$
and finally~\eqref{int.tildeu} yields 
$$\int_{(1-\delta)t_*}^{t_*} \widetilde u(x,t_*-s) p_s(0,0)\, \mathrm ds  \le \frac{c}{1+[(1-\delta)t_*]^{d/2}} \int_0^{\delta t_*}  \widetilde u(x,s)\, \mathrm ds
\le  \frac{cG(x)}{(1-m)(1-\delta)^{d/2}( 1+t_*^{d/2})}. $$ 
Altogether, and using again~\eqref{ps0x} for the last term in~\eqref{rec.tildeu}, we deduce that
$$h(x,t_*) \le  c + \frac {mC}{(1-\delta)^{d/2}} + \frac{c(\lambda_0-\lambda)G(x)}{(1-m)(1-\delta)^{d/2}} 
+ \frac{Cc(\lambda_0-\lambda)t_*}{\delta^d (1+ t_*^{d/2})}.$$
Now assume that $\delta$ is small enough, so that $\varepsilon := 1- \frac{m}{(1-\delta)^{d/2}}>0$. Then choose $C$ large enough, so that 
$$c+\frac{c(\lambda_0-\lambda)G(x)}{(1-m)(1-\delta)^{d/2}} \le \frac{\varepsilon}{4} C.$$ 
Let also $t_0$ be large enough, such that $\frac{c(\lambda_0- \lambda)t_0}{\delta^d(1+t_0^{d/2})} \le \varepsilon/4$ (note that this is possible since $d\ge 3$ by hypothesis), and take $C$ also larger than $\sup_{t\le t_0} h(x,t)$, so that $t_*\ge t_0$. With these choices of the constants, we obtain that $h(x,t_*)  \le (1- \frac{\varepsilon}{2})C$, contradicting the fact that $h(x,t_*)$ should be equal to $C$. Hence $t_* = \infty$, as wanted.

Note that using this Poisson representation, one can also infer that $\sup_{t\ge 0} \mathbb E[\widetilde N_t]<\infty$, when $\lambda_0<\lambda + \gamma_d$, which provides an alternate proof of the second part of~\eqref{eq:ABY}. Indeed, let $\widetilde N_\infty$ be the increasing limit of $\widetilde N_t$ as $t\to \infty$, which is also the total number of particles ever born in the process. 
By definition the total number of particles ever created by a single particle in its whole life {(we only consider direct offspring here, not all descendants)} is distributed as a Poisson random variable with mean $(\lambda_0-\lambda) \cdot L_\infty(0)$, where $L_\infty(0)$ is the total time it has spent at the origin. 
Thus, the mean number of offspring of each particle is equal to 
\[m=(\lambda_0-\lambda) \cdot \mathbb E[L_\infty(0)]=  \frac{\lambda_0-\lambda}{ \gamma_d}<1,\]
and consequently 
\[\sup_{t\ge 0} \mathbb E[\widetilde N_t] = \mathbb E[\widetilde N_\infty] = \sum_{n=0}^\infty m^n <\infty.\]
\end{remark}

\begin{remark}\label{rem:exp}
Thanks to Proposition~\ref{prop:exp} and the results proved in~\cite{ABY98}, we can infer some new first moment asymptotics in the critical and supercritical regimes, namely when $\lambda_0\ge  \lambda +\gamma_d$. More precisely, we can deduce from the results of~\cite{ABY98} that if $\lambda_0>\lambda+ \gamma_d$, there exist constants $C>0$, $\rho>\lambda$ and $(C(x))_{x\in \mathbb Z^d} \in (0,\infty)$, such that for all $x\in \mathbb Z^d$, 
\begin{equation*}
\lim_{t\to \infty} \mathrm e^{-\rho t}\cdot  \mathbb E[\Pi_t(x)]  = C(x), \quad \textrm{and}\quad 
\lim_{t\to \infty} \mathrm e^{-\rho t}\cdot \mathbb E[N_t]   = C.  
\end{equation*}
In the critical case $\lambda_0= \lambda + \gamma_d$, we can infer that $\mathrm e^{-\lambda t} \cdot \mathbb E[N_t]$ grows subexponentially fast. We refer to~\cite{ABY98} for more detailed results. 
\end{remark}


\section{Proof of Proposition~\ref{prop:balance} (``balance'' equations)}
\label{sec:balance}
We start by introducing a martingale which lies at the heart of the proof. 
\begin{lemma}\label{lem:balance1}
For all $t\ge 0$, we let $\rho_t = \lambda + (\lambda_0-\lambda)\widehat \Pi_t(0)$ and $M_t = N_t\cdot  \mathrm e^{-\int_0^t \rho_s\mathrm ds}$. 
Then the process $(M_t)_{t\ge 0}$ is a martingale bounded in~$L^2$. 
\end{lemma}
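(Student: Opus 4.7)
The plan is to recognise $M_t$ as essentially a Dol\'eans-Dade exponential for the pure jump process $N_t$. First I would identify the compensator of $N_t$: summing branching rates over all particles alive at time $s$ yields a total intensity $\lambda_0 \Pi_s(\{0\}) + \lambda(N_s - \Pi_s(\{0\})) = N_s \rho_s$, directly from the definition of $\rho_s$. Hence $\tilde N_t := N_t - \int_0^t N_s \rho_s \, ds$ is a local martingale. Since $A_t := e^{-\int_0^t \rho_s\, ds}$ is absolutely continuous with $dA_t = -\rho_t A_t\, dt$, integration by parts gives $dM_t = A_t\, dN_t + N_t\, dA_t = A_t\, d\tilde N_t$, so $M$ is a local martingale whose jumps have size $A_s$ and intensity $N_s \rho_s$.

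For the $L^2$-bound, I would compute the predictable bracket
\[
\langle M \rangle_t \;=\; \int_0^t A_s^2\, N_s\, \rho_s\, ds \;=\; \int_0^t A_s\, \rho_s\, M_s\, ds,
\]
and exploit the two-sided bound $\lambda \le \rho_s \le \lambda_0$, which forces $A_s \le e^{-\lambda s}$ and therefore $A_s \rho_s \le \lambda_0 e^{-\lambda s}$, an integrable function on $[0, \infty)$.

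The main technical point is a circularity: applying the It\^o isometry directly would require knowing a priori that $M$ is a true martingale, i.e.\ that $\mathbb{E}[M_s] = 1$. I would resolve this by a standard localisation argument. Setting $\tau_n := \inf\{t : N_t \ge n\}$, the stopped process $M^{\tau_n}$ is bounded by $n$ and hence a bona fide $L^2$-martingale, so the computation above yields
\[
\mathbb{E}\bigl[(M^{\tau_n}_t)^2\bigr] - 1 \;=\; \mathbb{E}\bigl[\langle M^{\tau_n} \rangle_t\bigr] \;\le\; \lambda_0 \int_0^t e^{-\lambda s}\, \mathbb{E}[M^{\tau_n}_s]\, ds \;\le\; \frac{\lambda_0}{\lambda},
\]
uniformly in $n$ and $t$. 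Non-explosion of the branching random walk---which holds because the per-particle branching rate is bounded by $\lambda_0$, so $N_t$ is dominated by a Yule process of parameter $\lambda_0$---ensures $\tau_n \uparrow \infty$ almost surely, and Fatou's lemma then upgrades the estimate to $\sup_t \mathbb{E}[M_t^2] \le 1 + \lambda_0/\lambda$. Uniform $L^2$-boundedness promotes $M$ from a local martingale to a genuine $L^2$-bounded martingale, concluding the proof.
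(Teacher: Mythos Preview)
Your proof is correct and reaches the same destination as the paper by a different route. The paper establishes the martingale property by showing directly that $t\mapsto \mathbb{E}[M_t\mid\mathcal F_s]$ has vanishing derivative, and then obtains the $L^2$ bound by an infinitesimal expansion: writing $\mathbb{E}[M_{t+h}^2]=\mathbb{E}[M_t^2]+h\,\mathbb{E}[M_t\rho_t\,\mathrm e^{-\int_0^t\rho_s\,\mathrm ds}]+O(h^2)$, bounding the right-hand side via Cauchy--Schwarz by $\lambda_0\,\mathrm e^{-\lambda t}\,\mathbb{E}[M_t^2]^{1/2}$, and integrating the resulting differential inequality to get $\sup_t\mathbb{E}[M_t^2]\le(1+\lambda_0/\lambda)^2$. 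Your approach---identifying the compensator $\int_0^t N_s\rho_s\,\mathrm ds$, writing $\mathrm dM_t=A_t\,\mathrm d\tilde N_t$, and computing $\langle M\rangle_t=\int_0^t A_s\rho_s M_s\,\mathrm ds$---arrives at exactly the same differential identity but exploits it more efficiently: because the stopped martingales $M^{\tau_n}$ have constant mean~$1$, you can insert $\mathbb{E}[M^{\tau_n}_s]=1$ directly and bypass Cauchy--Schwarz, which incidentally yields the sharper constant $1+\lambda_0/\lambda$. The paper's argument is lighter on prerequisites (no semimartingale calculus or localisation); yours is more structural and gives a better bound.
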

\begin{proof}
The fact that $N_t$ is square integrable for all $t\geq 0$ follows from the fact 
that $(N_t)_{t\geq 0}$ is stochastically dominated by the 
total population of a binary branching process with reproduction rate~$\lambda_0$, 
whose square is well-known to be integrable. 
For all $0\le s\le t$, we let $f_s(t) = \mathbb E[M_t|\mathcal F_s]$, 
where $(\mathcal F_s)_{s\ge 0}$ is the natural filtration of the process $(\Pi_s)_{s\ge 0}$. 
It is standard to see, using the definition of the process $(\Pi_t)_{t\geq 0}$ 
as a Markov jump process, 
that, for all $s\ge 0$, $t\mapsto f_s(t)$ is continuous on $[s,+\infty)$, and moreover, that for all $t>s$, $\frac{\mathrm d}{\mathrm dt} f_s(t)=0$, from which we deduce that $(M_t)_{t\ge 0}$ is indeed a martingale. 
To see that it is bounded in~$L^2$, note that for any fixed $t\ge 0$, 
\begin{align*}
\mathbb E[N_{t+h}^2 \mathrm e^{-2\int_0^t \rho_s\, ds}] 
& =  h\mathbb E[\rho_t M_t (N_t + 1)^2\mathrm e^{-\int_0^t\rho_s \, \mathrm ds}]  +\mathbb E[ (1 - h\rho_t N_t) M_t^2] + \mathcal O(h^2) \\ 
& = \mathbb E[M_t^2] + 2h\mathbb E[M_t^2 \rho_t] +  h \mathbb E[M_t\rho_t \mathrm e^{-\int_0^t \rho_s\, \mathrm ds}] + \mathcal O(h^2). 
\end{align*}
Hence, 
\begin{align*}
\mathbb E[M_{t+h}^2]
& =\mathbb E[N_{t+h}^2\cdot \mathrm e^{-2\int_0^t \rho_s\, \mathrm ds}] -2h \mathbb E[M_t^2 \rho_t] + \mathcal O(h^2) \\ 
& = \mathbb E[M_t^2] +   h \mathbb E[M_t\rho_t \mathrm e^{-\int_0^t \rho_s\, \mathrm ds}]+ \mathcal O(h^2).
\end{align*}
Using Cauchy-Schwarz inequality and the fact that, for all $t\geq 0$,
$\lambda \le \rho_t \le \lambda_0$, we get that 
\[\frac{\mathrm d}{\mathrm dt} \mathbb E[M_t^2] 
= \mathbb E[M_t\rho_t \mathrm e^{-\int_0^t \rho_s\, \mathrm ds}] \le \lambda_0 \mathbb E[M_t^2]^{1/2} \mathrm e^{-\lambda t}.\]
In other words,
\[\frac{\frac{\mathrm d}{\mathrm dt}\mathbb E[M_t^2]}{2\sqrt{\mathbb E[M_t^2]}}
\leq \frac{\lambda_0}2  \mathrm e^{-\lambda t}.\]
By integration, and because $\mathbb E[M_0^2] = 1$, we get
\[\sqrt{\mathbb E[M_t^2]}  \leq 1+\frac{\lambda_0}{2\lambda}.\]
Taking the square and then the supremum on $t$, we get
\[\sup_{t\ge 0}\mathbb E[M_t^2]  \le \Big( 1+ \frac{\lambda_0}{2\lambda}\Big)^2,\]
concluding the proof. 
\end{proof}

For the next step we need some additional notation: 
For all $x\in \mathbb Z^d$, we let $\Pi_t^x$ and $N_t^x$ be respectively the empirical measure of the particles and the total number of particles at time $t$ in the catalytic branching random walk starting with one particle at site $x$. 
Given any measure $\mu$ on $\mathbb Z^d$, we let 
$\Pi_t\cdot \mu = \sum_{x\in \mathbb Z^d} \mu(x)  \Pi_t^x$.  

\begin{lemma}\label{lem:balance2}
If there exists a probability measure~$\nu$ such that $\lim_{t\to \infty} \widehat \Pi_t = \nu$, almost surely for the topology of weak convergence,
then $\mathbb E[\Pi_t \cdot \nu]\cdot \mathrm e^{-\rho t} = \nu$, 
for all $t\ge 0$, with $\rho = \lambda + (\lambda_0- \lambda) \nu_0$.  
\end{lemma}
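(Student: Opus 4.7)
The strategy is to apply the branching property to $(\Pi_t)_{t\geq 0}$ at a large time $s$, divide by $N_s$, and let $s\to\infty$; the two sides of the resulting identity will converge, respectively, to $\mathbb{E}[\Pi_t\cdot\nu](y)$ and to $\mathrm{e}^{\rho t}\nu(y)$, yielding the lemma.

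Set $p_t(x,y)=\mathbb{E}[\Pi_t^x(\{y\})]$, and note that $p_t(x,y)\leq \mathrm{e}^{\lambda_0 t}$ uniformly in $x,y$ by a many-to-one formula. By the branching property, conditionally on $\mathcal{F}_s$ the process $(\Pi_{s+u})_{u\geq 0}$ is the sum of $N_s$ independent branching random walks issued from $X_1(s),\ldots,X_{N_s}(s)$, so $\mathbb{E}[\Pi_{s+t}(y)\mid\mathcal{F}_s]=\sum_x \Pi_s(x)\,p_t(x,y)$. Dividing by $N_s$ and taking expectations gives
\[
\mathbb{E}\!\left[\frac{\Pi_{s+t}(y)}{N_s}\right] \;=\; \sum_{x\in\mathbb{Z}^d}\mathbb{E}[\widehat\Pi_s(x)]\,p_t(x,y). \qquad (\star)
\]
For the right-hand side of $(\star)$, bounded convergence yields $\mathbb{E}[\widehat\Pi_s(x)]\to\nu(x)$ for every $x$. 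Since $\mathbb{E}[\widehat\Pi_s]$ and $\nu$ are probability measures on $\mathbb{Z}^d$, Scheff\'e's lemma promotes this to total-variation convergence; pairing with the bounded function $p_t(\cdot,y)$ gives $\sum_x\mathbb{E}[\widehat\Pi_s(x)]\,p_t(x,y)\to \sum_x\nu(x)\,p_t(x,y)=\mathbb{E}[\Pi_t\cdot\nu](y)$.

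For the left-hand side of $(\star)$, decompose $\Pi_{s+t}(y)/N_s=\widehat\Pi_{s+t}(y)\cdot N_{s+t}/N_s$. By Lemma~\ref{lem:balance1}, $M_t\to M_\infty$ a.s.\ and in $L^2$ with $\mathbb{E}[M_\infty]=1$; a Kesten--Stigum-type argument, exploiting a comparison with a pure-$\lambda$ branching process, shows that $M_\infty>0$ almost surely. Since the hypothesis $\widehat\Pi_t\to\nu$ forces $\rho_t\to\rho$ a.s., one obtains
\[
\frac{N_{s+t}}{N_s} \;=\; \frac{M_{s+t}}{M_s}\cdot\mathrm{e}^{\int_s^{s+t}\rho_u\,\mathrm d u} \;\xrightarrow[s\to\infty]{\text{a.s.}}\; \mathrm{e}^{\rho t},
\]
so $\Pi_{s+t}(y)/N_s\to \mathrm{e}^{\rho t}\nu(y)$ almost surely. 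To pass to expectations I establish the uniform bound $\mathbb{E}[(N_{s+t}/N_s)^2]\leq C(t)$ by conditioning on $\mathcal{F}_s$: $N_{s+t}$ is then a sum of $N_s$ conditionally i.i.d.\ sub-populations, each dominated by a pure-$\lambda_0$ Galton-Watson process, so its conditional variance is $O(N_s)$ and its conditional squared mean is $O(N_s^2)$, yielding $\mathbb{E}[N_{s+t}^2\mid\mathcal{F}_s]\leq C(t)N_s^2$. Since $\widehat\Pi_{s+t}(y)\in[0,1]$, the family $(\widehat\Pi_{s+t}(y)\cdot N_{s+t}/N_s)_s$ is uniformly integrable, so Vitali's theorem upgrades the a.s.\ limit to an $L^1$ limit and the left-hand side of $(\star)$ converges to $\mathrm{e}^{\rho t}\nu(y)$.

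Combining the two limits in $(\star)$ gives $\mathbb{E}[\Pi_t\cdot\nu](y)=\mathrm{e}^{\rho t}\nu(y)$, which is the stated identity. The main technical obstacle is the a.s.\ positivity of $M_\infty$ needed to conclude $N_{s+t}/N_s\to \mathrm{e}^{\rho t}$ almost surely; once this is in hand, the uniform $L^2$-estimate for the ratio is a routine branching-property computation and the rest is a combination of Scheff\'e's and Vitali's theorems.
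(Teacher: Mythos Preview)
Your overall architecture is sound, and your Scheff\'e argument for the right-hand side of $(\star)$ is actually cleaner than the corresponding step in the paper (which bounds $\mathbb E[\Pi_t^y(x)]$ via Cauchy--Schwarz and a Gaussian tail estimate on $\mathbb P(\Pi_t^y(x)\neq 0)$ to justify dominated convergence). The gap is on the left-hand side: the almost-sure positivity of $M_\infty$, which you flag as the ``main technical obstacle'' but do not prove.

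Your suggested route (``Kesten--Stigum, comparison with a pure-$\lambda$ process'') does not work as stated. The comparison gives $N_t\ge\tilde N_t$ with $\tilde N_te^{-\lambda t}\to\tilde W>0$ a.s., but
\[
M_t \;=\; N_t\,e^{-\lambda t}\cdot\exp\!\Big(-(\lambda_0-\lambda)\int_0^t\widehat\Pi_s(0)\,\mathrm ds\Big),
\]
and under the hypothesis $\widehat\Pi_s(0)\to\nu_0$ the integral diverges whenever $\nu_0>0$, so the lower bound degenerates. The usual zero-one law for branching martingales is also unavailable: $\rho_t$ depends on the \emph{entire} empirical measure $\widehat\Pi_t$, so $M_t$ does not split additively over the subtrees descending from the particles alive at a fixed time, and one cannot write $\{M_\infty=0\}$ as an intersection of independent subtree events. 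Without $M_\infty>0$ a.s.\ you cannot conclude $M_{s+t}/M_s\to1$, hence not $N_{s+t}/N_s\to e^{\rho t}$, and the identification of the LHS limit is unjustified.

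The paper bypasses this by normalising with $e^{-\int_0^s\rho_u\,\mathrm du}$ instead of $N_s$. Then $\Pi_{t+s}(x)\,e^{-\int_0^{t+s}\rho_u\,\mathrm du}=M_{t+s}\,\widehat\Pi_{t+s}(x)\to M_\infty\nu_x$ a.s.\ and in $L^1$, using only $\mathbb E[M_\infty]=1$ (never positivity); since $\int_s^{t+s}\rho_u\,\mathrm du\to\rho t$ a.s.\ and stays bounded, this yields $\mathbb E\big[\Pi_{t+s}(x)\,e^{-\int_0^s\rho_u\,\mathrm du}\big]\to e^{\rho t}\nu_x$. The branching identity then gives the other side as $\sum_y\mathbb E[\Pi_t^y(x)]\,\mathbb E[\Pi_s(y)\,e^{-\int_0^s\rho_u\,\mathrm du}]$, and the case $t=0$ of the same limit identifies each weight as $\nu_y$. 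Incidentally, your Scheff\'e trick transplants here and would remove the need for the paper's Gaussian bound, since the weights $\mathbb E[\Pi_s(y)\,e^{-\int_0^s\rho_u\,\mathrm du}]=\mathbb E[M_s\widehat\Pi_s(y)]$ are nonnegative, sum over $y$ to $\mathbb E[M_s]=1$, and converge pointwise to $\nu_y$.
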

\begin{proof}
Fix $t\ge 0$. On one hand, we know by Lemma~\ref{lem:balance1} that $(M_t)_{t\ge 0}$ converges almost surely and in $L^2$ towards some random variable $M_\infty$. It follows that, for any $x\in \mathbb Z^d$, almost surely,
\[\Pi_{t+s}(x) \mathrm e^{-\int_0^{t+s} \rho_u\, \mathrm du}
= M_{t+s}\widehat \Pi_{t+s}(x) \ \underset{s\to \infty}{\longrightarrow} \ 
M_\infty \nu_x .\]  
Moreover, since, by definition, $|\widehat \Pi_{t+s}(x)|\le 1$, this convergence also holds in $L^1$, which implies
\begin{equation*}
   \lim_{s\to \infty} \mathbb E[\Pi_{t+s}(x) \mathrm e^{-\int_0^{t+s} \rho_u\, \mathrm du}] = \nu_x,
   \end{equation*}
   because $\mathbb E[M_\infty] = 1$.
Furthermore, because by assumption $\rho_u \to \rho$ almost surely as $u\uparrow\infty$, we get $\int_s^{t+s} \rho_u\, \mathrm du = \rho t + o(1)$ as $s\uparrow\infty$. 
Since for any fixed $t>0$, $\int_s^{t+s} \rho_u\, \mathrm du$ is uniformly bounded in $s$, and using that the martingale $(M_t)_{t\ge 0}$ is bounded in $L^2$ by Lemma~\ref{lem:balance1}, we deduce by dominated convergence, that
\begin{equation}\label{limitPinu2}
 \lim_{s\to \infty} \mathbb E[\Pi_{t+s}(x) \mathrm e^{-\rho t - \int_0^s \rho_u\, \mathrm du} ]= \nu_x.
 \end{equation}
Now note that, for all $s, t\ge 0$, 
if we let $(\widetilde \Pi_t)_{t\geq 0}$ be an independent copy of the process $(\Pi_t)_{t\geq 0}$, then $\Pi_{t+s} = \widetilde \Pi_t \circ \Pi_s$ in distribution, where, with a slight abuse of notation, we consider that $\widetilde \Pi_t$ acts as well on a particle configuration and on its empirical measure. 
By linearity of the expectation, this implies
\[\mathbb E[\Pi_{t+s}(x) \mathrm e^{- \int_0^s \rho_u\, \mathrm du} ] 
= \sum_{y\in \mathbb Z^d} \mathbb E[\Pi_t^y(x)]  \mathbb E[\Pi_s(y) \mathrm e^{- \int_0^s \rho_u\, \mathrm du}].\]
Applying the convergence result~\eqref{limitPinu2} with $t=0$, we get that, for all $y\in \mathbb Z^d$, 
\[\lim_{s\to \infty}  \mathbb E[\Pi_s(y) \mathrm e^{- \int_0^s \rho_u\, du}] = \nu_y.\]
It just remains now to invert the summation over $y\in \mathbb Z^d$ and the limit $s\uparrow \infty$, which we do using the dominated convergence theorem: 
First note that
\[\sup_{y\in \mathbb Z^d} \sup_{s\ge 0} \mathbb E[\Pi_s(y) \mathrm e^{- \int_0^s \rho_u\, \mathrm du}] 
\le \sup_{s\ge 0} \mathbb E[M_s] <\infty.\]
Also, by Cauchy-Schwarz's inequality, for all $y\in \mathbb Z^d$ and $t\ge 0$, 
\[\mathbb E[\Pi_t^y(x)] \le \mathbb E[(N_t^y)^2]^{\nicefrac12}  \mathbb P(\Pi_t^y(x)\neq 0)^{\nicefrac12}.\]
Moreover, when $\|y\|>2\|x\|$, we can bound the probability on the right-hand side using that the probability for a simple random starting from~$y$ to enter the ball $B(0,\|x\|)$ by time $t$ is bounded by $C \exp(- c\|y\|^2/t)$, for some 
positive constants $c$ and $C$. -Hence a union bound on the set of particles alive at time $t$ ensures that, for all $y\in\mathbb Z^d$ satisfying $\|y\|>2\|x\|$,
\[\mathbb P(\Pi_t^y(x)\neq 0) \le C \mathrm e^{\lambda t} \cdot \exp(- c\|y\|^2/t).\]
Finally, because the catalytic branching random walk is stochastically dominated by a branching random walk reproducing at constant rate $\lambda_0$ everywhere, 
we get that, for any fixed $t\ge 0$, 
\[\sup_{y\in \mathbb Z^d} \mathbb E[(N_t^y)^2] < \infty.\]
Thus, the dominated convergence theorem applies and gives that
\[\lim_{s\to \infty} \mathbb E[\Pi_{t+s}(x) \cdot \mathrm e^{- \int_0^s \rho_u\, \mathrm du} ] = \sum_{y\in \mathbb Z^d} \mathbb E[\Pi_t^y(x)] \cdot \nu_y.\]
Together with~\eqref{limitPinu2}, this concludes the proof. 
\end{proof}

We are now in a position to conclude the proof of our main result in this section:

\begin{proof}[Proof of Proposition~\ref{prop:balance}]
By Lemma~\ref{lem:balance2}, for all $x\in \mathbb Z^d$ and $t\ge 0$,
\[\nu_x = \sum_{y\in \mathbb Z^d} \nu_y \cdot \mathbb E[\Pi_t^y(x)] \cdot \mathrm e^{-\rho t}.\]
Taking the differential in $t$ at $t=0$, we get that, for all $x\neq 0$,  
\[0 = \nu_x (\lambda- 1-\rho) + \frac 1{2d} \sum_{y\sim x} \nu_y,\]
and when $x=0$, 
\[0 = \nu_0(\lambda_0 - 1 - \rho) + \frac 1{2d} \sum_{y\sim 0} \nu_y,\]
which after simplifying give respectively~\eqref{eq:balance1} and~\eqref{eq:balance2}, as desired.  
\end{proof}

\section{Proof of Theorem~\ref{th:no_loc}: existence of a ``stationary'' measure}
\label{sec:noloc}

\subsection{Proof of Theorem~\ref{th:no_loc}(i): non-localisation}
\label{sub:no_existence}
First note that any probability measure~$\nu$ satisfying~\eqref{eq:balance1} must satisfy $\nu_0>0$. Indeed, if $\nu_0=0$, then~\eqref{eq:balance1} and~\eqref{eq:balance2} (which follows from \eqref{eq:balance1} and the fact that $\sum_{x\in\mathbb Z^d} \nu_x = 1$) 
show that $\nu$ is harmonic. 
However, it is well-known that any 
bounded harmonic function on $\mathbb Z^d$ is constant, 
hence $\nu_x = 0$ for all $x\in\mathbb Z^d$, which is a contradiction. 

Now, let $\nu$ be a probability measure on $\mathbb Z^d$ satisfying~\eqref{eq:balance1}. 
To simplify notation, we set $\varepsilon = \lambda_0-\lambda$. 
Consider a simple random walk $(S_n)_{n\ge 0}$ on $\mathbb Z^d$, and let $\tau_0$ be the first return time to $0$, i.e.\ $\tau_0 = \inf\{n\ge 1 : S_n = 0\}$. 
Equation~\eqref{eq:balance1} shows that for any $x\neq 0$, 
on the event that $S_0 = x$, the process $(M_n)_{n\ge 0}$, defined for $n\ge 0$ by 
\[M_n = \frac{\nu_{S_{n\wedge \tau_0}}}{(1+\varepsilon \nu_0)^{n\wedge \tau_0}},\] 
is a bounded martingale. Hence, the optional stopping theorem gives
\begin{equation}\label{nux}
\nu_x  = \mathbb E_x\Big[\frac{\nu_0\cdot  \mathbf 1\{\tau_0<\infty\}}{(1+\varepsilon \nu_0)^{\tau_0}}\Big], 
\end{equation}
where $\mathbb E_x$ denotes expectation with respect to the law of the random walk starting from $x$.
Using now the symmetry of the walk, and letting $\tau_x$ denote the hitting time of $x$, we get 
\[\nu_x = \nu_0 \cdot \mathbb E\Big[  \frac{\mathbf 1\{\tau_x<\infty\}}{(1+\varepsilon \nu_0)^{\tau_x}}\Big] = \nu_0 \sum_{k=1}^\infty \frac{1}{(1+\varepsilon \nu_0)^k }\cdot \mathbb P(\tau_x= k).\]
Summing over $x\in \mathbb Z^d\setminus \{0\}$, and taking into account that $(\nu_x)_{x\in \mathbb Z^d}$ is a probability measure, we get 
\[1  = \nu_0 + \sum_{x\in \mathbb Z^d\setminus \{0\}} \nu_x  = \nu_0 + \nu_0\sum_{k = 1}^\infty \frac{1}{(1+\varepsilon \nu_0)^k }\cdot \mathbb P(S_k\notin \{S_0,\dots,S_{k-1}\} ).\]
Reversing time again, we deduce that 
\begin{align} \label{eq.3}
\nonumber 1 & = \nu_0 + \nu_0 \sum_{k = 1}^\infty \frac{1}{(1+\varepsilon \nu_0)^k }\cdot \mathbb P(0\notin \{S_1,\dots,S_k\} )\\
& = \nu_0 + \nu_0 \sum_{k = 1}^\infty \frac{1}{(1+\varepsilon \nu_0)^k }\cdot \mathbb P(\tau_0 \ge k+1). 
\end{align}
Note now that for any $k\ge 1$, one has $\mathbb P(\tau_0\ge k+1)\ge \mathbb P(\tau_0= \infty)=\gamma_d$. Thus~\eqref{eq.3} gives
\[1 \ge  \nu_0 + \nu_0 \gamma_d \sum_{k = 1}^\infty \frac{1}{(1+\varepsilon \nu_0)^k } = \nu_0 + \frac{\gamma_d}{\varepsilon},\]
which yields a contradiction if $\varepsilon\le \gamma_d$, since we recall that $\nu_0>0$.

\subsection{Proof of Theorem~\ref{th:no_loc}(ii): existence of a stationary measure}
As proved in Section~\ref{sub:no_existence}, any probability measures~$\nu$ satisfying~\eqref{eq:balance1} satisfies $\nu_0>0$.
For any positive real number $\nu_0$, we use~\eqref{nux} 
to define a positive measure $\nu$ on $\mathbb Z^d$ that satisfies~\eqref{eq:balance1}, by the Markov property. 
Now note that a simple random walk on $\mathbb Z^d$ started at some $x$ cannot reach~$0$ before time $\|x\|_{\infty}$.
Thus, $\nu_x \le \nu_0(1+\varepsilon \nu_0)^{-\|x\|_\infty}$, which is summable. 
Hence it only remains to show that, for all $\varepsilon>\gamma_d$, 
there exists a unique value of $\nu_0$ making $\nu$ a probability measure. 
This is equivalent to showing that there exists a unique value of $\nu_0$ such that~\eqref{eq.3} is satisfied. 
Letting $u=\varepsilon \nu_0$, we see that this is equivalent to the existence and uniqueness of a solution to the equation $f(u) = \varepsilon$, 
where $\varepsilon>\gamma_d$ is fixed and for $u>0$, 
\begin{equation}\label{eq.u}
f(u) = u + u \sum_{k=1}^\infty \frac {1}{(1+u)^k} \cdot \mathbb P(\tau_0\ge k+1). 
\end{equation}
Noting that $\mathbb P(\tau_0\ge k+1) =\gamma_d +  \sum_{i\ge k+1} \mathbb P(\tau_0 = i)$, and inverting the order of summation, we get 
$$f(u) = u + \gamma_d + \sum_{i=2}^\infty  \mathbb P(\tau_0=i)\cdot \Big(1- \frac{1}{(1+u)^{i-1}}\Big). $$ 
Noting also that $\tau_0\ge 2$ almost surely, this gives 
$$f(u) = 1+ u - \sum_{i=2}^\infty  \frac{\mathbb P(\tau_0=i)}{(1+u)^{i-1}}
=1+  u - \mathbb E\Big[\frac{1}{(1+u)^{\tau_0-1}}\Big]. $$ 
(Note that this last expression of $f$ could also have been derived directly from~\eqref{eq:balance2} and~\eqref{nux}.)
In particular $f$ is strictly increasing on $(0,\infty)$, converges to $+\infty$, as $u$ goes to infinity, and by dominated convergence, we can see that it converges to $\gamma_d$ as $u$ goes to zero,
which proves well that for any $\varepsilon>\gamma_d$, the equation $f(u)=\varepsilon$ has a unique solution.

\begin{remark}
Note that the last expression of $f$ above rewrites as 
$$f(u ) =(1+ u)\cdot \Big(1  - G\Big(\frac 1{1+u}\Big)\Big),$$
where $G(z) = \mathbb E[z^{\tau_0}]$, is the generating function of $\tau_0$. Now for $n\ge 0$, denote by 
$\tau_n$ the $(n+1)$-th return time to the origin of the walk, and let $G_n(z) = \mathbb E[z^{\tau_n}]$. 
By independence between the successive return times to the origin, one has 
$G_n(z) = G(z)^{n+1}$, for any $n\ge 0$. In particular one has for any $0<z<1$, 
\begin{equation}\label{taun}
 \sum_{n\ge 0} G_n(z) = \frac{G(z)}{1-G(z)}.
\end{equation}
Now in~\cite{DR}, branching occurs only at the origin and at rate $1$, which corresponds to choosing $\lambda = 0$ and $\lambda_0 = 1$ in our notation. In this case, when furthermore each particle splits into two particles at each branching event (as in our case), they show that the exponential growth rate $\rho$ of the catalytic branching random walk (the so-called Malthusian parameter) is solution of the equation 
\begin{equation} \label{conditionDR}
\int_0^\infty \mathrm e^{-\rho t}\,  p_t(0,0)\, dt = 1,
\end{equation}
where $p_t(0,0)$ is the probability that a continuous time simple random walk is at the origin at time $t$. Since by definition the time between any two consecutive jumps is a mean one exponential random variable, one also has, with $\widetilde \tau_n$ the $n$-th return time to the origin for the continuous time random walk,  
\begin{align*}
 \int_0^\infty \mathrm e^{-\rho t}\,  p_t(0,0)\, dt 
 & = \frac 1{1+\rho} + \sum_{n\ge 1} \int_0^\infty \mathrm e^{-\rho t}\cdot \mathbb E\big[\mathrm e^{-(t-\widetilde \tau_n)} \cdot \mathbf 1\{\widetilde \tau_n \le t\} \big] \, dt \\
 & = \frac 1{1+\rho} + \frac 1{1+\rho}  \sum_{n\ge 1} \mathbb E[\mathrm e^{-\rho \widetilde \tau_n}] \\ 
 & = \frac 1{1+\rho} + \frac 1{1+\rho}\sum_{n\ge 0} G_n\Big(\frac 1{1+\rho}\Big).
 \end{align*} 
Hence, using~\eqref{taun}, we see that~\eqref{conditionDR} is equivalent to 
$$G\Big(\frac 1{1+\rho}\Big) = \frac{\rho}{1+\rho}.$$
In particular we recover well the same equation for $\rho$ as our equation defining $\nu_0$, namely $f(\nu_0) = 1$, which agrees with the fact that in our setting, when $\lambda_0=1$, and in the limit $\lambda\to 0$, the exponential growth rate is given by $\nu_0$, as shown by combining Lemma~\ref{lem:balance1} and~Theorem~\ref{th:main}. 
\end{remark}

\section{Proof of localisation (proof of Theorem~\ref{th:main})}
\label{sec:mvpp}
In this section, we assume that $\lambda_0>2d-1+2d\lambda$.
For the proof of Theorem~\ref{th:main}, we use the fact that $(\Pi_t)_{t\geq 0}$, taken at the times when it changes values is a ``measure-valued P\'olya process'' (or, in other words, an infinitely-many-colour P\'olya urn); we thus start the section with some useful background and existing results on these processes.

\subsection{Measure-valued P\'olya processes}

\begin{definition}
Let $E$ be a Polish space, $\pi$ a finite measure on $E$, 
$R^{\sss (1)} = (R^{\sss (1)}_x)_{x\in E}$ be a random kernel on $E$ 
(i.e., for all $x\in E$, $R^{\sss (1)}_x$ is a random measure on~$E$, almost surely finite),
and $P = (P_x)_{x\in E}$ is a kernel on $E$ (i.e., for all $x\in E$, $P_x$ is a finite measure on~$E$).
The measure-valued P\'olya process (MVPP) of initial composition $\pi$, replacement kernel $R^{\sss (1)}$ and weight kernel $P$ is the sequence of random measures $(m_n)_{n\geq 0}$ defined recursively as follows:
$m_0 = \pi$ and, for all $n\geq 0$,
\[m_{n+1} = m_n + R^{(n+1)}_{\xi(n+1)},\]
where, given $m_n$, $\xi(n+1)$ is a random variable of distribution $m_n P/m_nP(E)$
with 
\[m_nP = \int_{x\in E} P_x\mathrm dm_n(x),\]
and where, given $\xi(n+1)$, $R^{\sss (n+1)}_{\xi(n+1)}$ is an independent copy of $R^{\sss (1)}_{\xi(n+1)}$.
\end{definition}

\begin{lemma}
For all $n\geq 0$, let $\tau_n$ be the time of the $n$-th event (jump of a particle or birth of a particle).
Also let $m_n = \frac1\kappa \Pi_{\tau_n}$ for all $n\geq 0$, where 
\begin{equation}\label{eq:def_kappa}
\kappa = \lambda_0 - \frac{\lambda_0-\lambda}{1+\lambda_0}.
\end{equation}
Then, $(m_n)_{n\geq 0}$ is an MVPP with the following parameters:
\begin{itemize} 
\item initial composition $m_0 = \frac1\kappa\delta_{\bf 0}$; 
\item replacement kernel $(R_x^{\sss (1)})_{x\in\mathbb Z^d}$ where, for all $x\in\mathbb Z^d$,
\[R^{\sss (1)}_x = \frac1{\kappa}\big(B_x\delta_x + (1-B_x)(\delta_{x+\Delta}-\delta_x)\big)
= \frac1{\kappa}\big((2B_x -1)\delta_x + (1-B_x)\delta_{x+\Delta}\big),\]
where $B_x$ is a Bernoulli random variable of parameter $\frac{\lambda_x}{1+\lambda_x}$, 
with $\lambda_x = \lambda_0$ if $x = 0$ and $\lambda_x = \lambda$ otherwise,
and where $\Delta$ is a simple symmetric random walk increment independent of $B_x$;
\item weight kernel $((1 + \lambda_x)\delta_x)_{x\in \mathbb Z^d}$.
\end{itemize}
\end{lemma}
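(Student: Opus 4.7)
The plan is to verify directly, from the MVPP definition, that the random measure $m_n = \Pi_{\tau_n}/\kappa$ at the $n$-th jump time of the particle system satisfies the correct update rule with the claimed initial composition, weight kernel and replacement kernel. The key tools are the superposition and competition properties of independent exponential clocks, together with the strong Markov property of the branching random walk at each $\tau_n$. No convergence theorem from MVPP theory enters at this stage; this step is purely the identification of the continuous-time model, sampled at its jump times, as an instance of the abstract MVPP construction.

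Concretely, I would fix $n\ge 0$ and work conditionally on $\mathcal F_{\tau_n}$. Each of the $\Pi_{\tau_n}(\mathbb Z^d)$ particles alive at $\tau_n$ carries an independent jump clock of rate $1$ and an independent branch clock of rate $\lambda_x$ (with $\lambda_0$ at the origin and $\lambda$ elsewhere). Superposing these clocks shows that the time until $\tau_{n+1}$ is exponential with total rate $\sum_x (1+\lambda_x)\Pi_{\tau_n}(x)$, and that the location $x$ of the particle involved in the $(n{+}1)$-th event is drawn with probability proportional to $(1+\lambda_x)\,\Pi_{\tau_n}(x)$. Since the measure $m_n P$, with $P_x=(1+\lambda_x)\delta_x$, assigns mass $(1+\lambda_x)\Pi_{\tau_n}(x)/\kappa$ to $\{x\}$, normalising to obtain $m_n P/(m_n P)(\mathbb Z^d)$ gives exactly the same law --- the factor $\kappa$ cancels between numerator and denominator --- which identifies the weight kernel.

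Next I would identify the replacement kernel. Conditionally on the selected position being $x$, the event is a branching with probability $\lambda_x/(1+\lambda_x)$, producing an additional particle at $x$ (so $\Pi$ gains $\delta_x$), and a jump with probability $1/(1+\lambda_x)$, in which the chosen particle moves to a neighbour $x+\Delta$ sampled uniformly from the $2d$ possibilities (so $\Pi$ gains $\delta_{x+\Delta}-\delta_x$). Writing $B_x$ for the Bernoulli indicator of the branching alternative, the increment of $\Pi$ is exactly $B_x\delta_x + (1-B_x)(\delta_{x+\Delta}-\delta_x)$; dividing by $\kappa$ yields $R^{(1)}_x$ as stated. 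The initial composition is immediate since $\Pi_0=\delta_{\mathbf 0}$, hence $m_0 = \delta_{\mathbf 0}/\kappa$. Independence across $n$ of the replacements (given the sequence of selected positions) follows from the strong Markov property at $\tau_n$, and the independence of $B_x$ from $\Delta$ reflects the fact that ``which clock rings first'' and ``which neighbour a jump would go to'' are determined by disjoint sources of randomness.

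I do not anticipate any real obstacle at this step: the statement is a careful but essentially routine unpacking of two definitions. The only mildly delicate point is to keep track of the roles of two different types of normalisation --- the global scaling $\kappa$, which cancels in every ratio defining the sampling law and plays no role in the transition mechanism, and the factors $(1+\lambda_x)$, which genuinely bias the sampling and are therefore encoded in the weight kernel $P$ rather than in $R^{(1)}$.
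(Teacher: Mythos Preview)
Your proposal is correct and is essentially the only natural approach: unpack the definition of the branching random walk at its jump times and match it against the MVPP definition via the superposition and thinning properties of exponential clocks. In fact, the paper does not give an explicit proof of this lemma at all---it is stated and then immediately used, the identification being treated as a direct consequence of the two definitions---so your argument supplies precisely the details the paper leaves implicit.
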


\begin{remark}
The reason why we divide $\Pi_{\tau_n}$ by $\kappa$ in the definition of $m_n$ is technical, and will be discussed later (we need that $\sup_{x} \mathbb E[R^{\sss (1)}_x(\mathbb Z^d)]\leq 1$). Note, however, that $\kappa>0$.
\end{remark}

To prove localisation, 
we use~\cite[Theorem~1]{MV20} 
(see Section~1.4 where the case of $R^{\sss (1)}$ being a signed-measure is discussed). 
We prove that $(m_n)_{n\geq 0}$ satisfies the assumptions (T), (A'1), (A'2), (A'3) and (A4) below, which are a slight modification of the assumptions in~\cite{MV20} (we replace (A1) and (A3) therein by two other assumptions called (A'1) and (A'3)). 
First define $R = \mathbb E[R^{\sss (1)}]$, meaning that, for all $x\in E$, for all measurable sets $A\subset E$, \[R_x(A) = \mathbb E[R^{\sss (1)}_x(A)].\] 
We also define, for all $x\in E$,
\begin{equation}\label{eq:defQ}
Q^{\sss (1)}_x = \sum_{y\in E} R^{\sss (1)}_x(\{y\}) P_y 
\quad\text{ and }\quad 
Q_x = \sum_{y\in E}  R_x(\{y\}) P_y,
\end{equation}
or, in other words, $Q^{\sss (1)} = R^{\sss (1)}P$ and $Q = RP$.
 \begin{itemize}
 \item[(T)] For all $n\geq 0$, $m_n$ is a positive measure. 
\item[(A'1)] For all $x\in E$, $Q_x(E)\leq 1$.
Furthermore, there exists $c>0$ such that
\[\liminf_{n\uparrow\infty} \frac{m_nP(E)}{n}
= \liminf_{n\uparrow\infty} \frac1n \sum_{i=1}^n Q^{\sss (i)}_{\xi(i)}(E)
\geq c
\quad\text{ and }\quad
\liminf_{n\uparrow\infty} \frac1n \sum_{i=1}^n Q_{\xi(i)}(E)\geq c.\]
%
\item[(A'2)] there exist a locally bounded function
$V\,:\,E\to [1,+\infty)$ and some constants $r>1$, $p> 2$,
$q'>q:=p/(p-1)$, $\theta\in(0,c)$, $K>0$, $A\geq 1$, and $B\geq 1$, such that
\begin{itemize}
\item[(i)] for all $N\geq 0$, 
the set $\{x\in E \colon V(x)\leq N\}$ is relatively compact.
\item[(ii)]  for all $x\in E$,
\[Q_x \cdot V\leq \theta V(x) + K \quad\text{and}\quad Q_x \cdot V^{\nicefrac1q}\leq \theta V^{\nicefrac1q}(x) + K\]
\item[(iii)] for all continuous functions $f: E\rightarrow\mathbb R$ bounded by~$1$ and all $x\in E$, 
  \[|Q_x\cdot f|^{q'} 
  \vee \mathbb E \big[\big| {R}^{\sss (1)}_{x} \cdot f - R_{x}\cdot f\big|^r\big]
  \vee \mathbb E\big[\big| {Q}^{\sss (1)}_{x} \cdot f - Q_{x}\cdot f\big|^p\big] 
  \leq AV(x),\]
\item[(iv)] and
  \[|Q_x\cdot V^{\nicefrac1q}|^{q}\vee |Q_x\cdot V|\vee 
\mathbb E\left[\left|Q^{\sss (1)}_x\cdot V^{\nicefrac1q}-Q_x\cdot V^{\nicefrac1q}\right|^{r}\right] \leq B V(x).\]
\end{itemize}
\item[(A'3)] the continuous-time pure jump Markov process~$X$ with
  sub-Markovian jump kernel $Q-I$ admits a quasi-stationary
    distribution $\nu\in\mathcal P(E)$ (the set of all probability measures on~$E$).
    Also, the
  convergence of $\mathbb P_{\alpha}(X_t\in \cdot \,|\, X_t \neq \partial)$
  holds uniformly with respect to the total variation norm on a neighbourhood of $\nu$ in
  $\mathcal P_C(E)=\{\alpha\in\mathcal P(E)\mid \alpha \cdot V^{1/q}\leq C\}$, for each
  $C>0$, where $q = p/(p-1)$.
  Finally, for all $\alpha\in\mathcal P_C(E)$, $\mathbb P_\alpha(X_t\neq\partial)\geq \mathrm e^{-(1-c_0)t}$ for some constant $c_0>\theta$.
\item[(A4)] for all bounded continuous functions $f:E\rightarrow\mathbb R$,
  $x\in E \mapsto R_x\cdot f$ and $x\in E\mapsto Q_x\cdot f$ are continuous.
\end{itemize}

\begin{remark} By definition, i.e.\ because $m_n$ is the occupation measure of the catalytic random walk at the $n$-th jumping or branching event, (T) holds.
\end{remark}

\begin{remark} We have replaced Assumption (A1) in~\cite{MV20} by Assumption (A'1).
The fact that this can be done can be checked by going through the proofs in~\cite{MV20}, which adapt straightforwardly.
Indeed, here is a list of the changes one needs to make:
\begin{itemize}
\item In \cite[Lemma~3]{MV20}, one needs to change the definition of $\sigma_k$ to
\[\sigma_k = \inf\{n\geq k \colon m_nP(E)<c'n \text{ or }\sum_{i=1}^n Q_{\xi(i)}(E)<c'n\},\]
where $c'\in(\theta, c)$. The proof of Lemma 3 then directly follows from (A'1).
\item Assumption (A1) is then used three times in the proof of \cite[Lemma~5]{MV20}; the same three arguments go through by using the fact that on $\{\ell\leq\sigma_k\}$, $\tilde \eta_{\ell-1}Q(E):= \sum_{i=1}^{\ell-1} Q_{\xi(i)}(E)\geq c'(\ell-1)$ (instead of $\tilde \eta_{\ell-1}Q(E)\geq c_1 = c$, where $c_1$ is the notation used in~\cite{MV20} for our constant $c$).
\item Assumption (A1) is also used in the proof of \cite[Lemma~6]{MV20}.
There, one has to first fix an integer $k\geq 0$ and prove that there exists a random value of $C = C(k)$ such that $\tilde\eta_n \in \mathcal P_C(E)$ (the set of all probability measures $\mu$ on $E$ such that $\mu\cdot V^{\nicefrac1q}\leq C$) for all $n\leq \sigma_k$.
Instead of using Assumption (A1), we use the fact that $\tilde\eta_n(E)\leq c'n$ for all $n\leq \sigma_k$; with this modification, the proof goes through replacing $c_1$ ($=c$ in this paper) 
by $c'$ and only considering~$n$ up to~$\sigma_k$.
Because, by~\cite[Lemma~3]{MV20}, $\mathbb P(\cup_{k\geq 0}\{\sigma_k = \infty\}) = 1$,  
this is indeed enough to imply the conclusion of~\cite[Lemma~6]{MV20}.
\item Assumption (A1) is also used in the proof of \cite[Lemma~7]{MV20}: the authors use it to prove that, for any $\alpha\in\mathcal P_C(E)$, $\mathbb P_{\alpha}(X(t)\in E)\geq \mathrm e^{-(1-c_0)t}$ for some $c_0>\theta$ (in \cite[Lemma~7]{MV20}, $c_0=c$ but this is not necessary). 
This is now given by Assumption (A'3) and thus the proof of \cite[Lemma~7]{MV20} goes through under our modified set of assumptions: one only needs to replace every occurrence of $c_1$ by $c_0$.
\item Finally, Assumption (A1) is used in the proof of Proposition~7 (Step~1 of the proof): 
the authors use the fact that $m_nP(E)\geq c_1 n$ ($=cn$ in this paper) almost surely for all $n\geq 1$.
Instead, we can use the fact that $m_nP(E)\geq cn/2$ almost surely for all $n$ large enough.
\end{itemize}
\end{remark}

\begin{remark}
In Assumption (A3) in~\cite{MV20}, it is assumed that the convergence of 
$\mathbb P_{\alpha}(X_t\in \cdot \,|\, X_t \neq \partial)$
  holds uniformly with respect to the total variation norm in
  $\{\alpha\in\mathcal P(\mathbb Z^d)\mid \alpha \cdot V^{1/q}\leq C\}$, for each
  $C>0$.
  We are not able to prove that this assumption holds for our model, and hence replace (A3) by the weaker (A'3).
\end{remark}

\begin{remark} Because $\mathbb Z^d$ is discrete, (A4) holds straightforwardly.
\end{remark}

The following theorem is a close adaptation (the only difference being the assumption (A'3)) of Theorem~1 of~\cite{MV20}:
\begin{theorem}
\label{th:MV20}
Under Assumptions {\rm (T)}, {\rm (A'1)}, {\rm (A'2)}, {\rm (A'3)}, and {\rm (A4)}, 
if $m_0\cdot V<\infty$ and $m_0P\cdot V<\infty$, then the sequence of random measures
$(m_n/n)_{n\geq 0}$ converges almost surely to $\nu R$ with respect to the topology of weak convergence.  
Furthermore, 
if $\nu R(E)>0$, then $(\tilde{m}_n := m_n/m_n(E))_{n\in\mathbb N}$
converges almost surely to $\nu R/\nu R(E)$ with respect to the
topology of weak convergence.
\end{theorem}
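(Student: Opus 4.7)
The plan is to follow the proof of Theorem~1 of~\cite{MV20} essentially verbatim, replacing only the one step where the uniform QSD-convergence assumption (A3) of~\cite{MV20} is invoked. The starting point is the recursion $m_{n+1}-m_n = R^{\sss (n+1)}_{\xi(n+1)}$ with $\xi(n+1) \sim \pi_n := m_n P / m_n P(\mathbb Z^d)$, whose conditional drift given $\mathcal F_n$ is $\pi_n R$. A standard stochastic-approximation decomposition, combined with the $L^p$-bounds in (A'2)(iii) and the Lyapunov control in (A'2)(ii), reduces the convergence $m_n/n \to \nu R$ to the almost-sure convergence $\pi_n \to \nu$ in a suitable topology on $\mathcal P(\mathbb Z^d)$.

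The proof in~\cite{MV20} establishes $\pi_n \to \nu$ by comparing $\pi_n$ to renormalised semigroup iterates $\alpha Q^k/\alpha Q^k(\mathbb Z^d)$ for a sequence of $\alpha$'s tied to $\pi_n$ itself, and then invoking (A3) to obtain convergence to $\nu$ uniformly over all $\alpha \in \mathcal P_C(\mathbb Z^d)$. Under the weaker assumption (A'3), this uniform convergence is only guaranteed on a neighbourhood $V_\nu$ of $\nu$ in $\mathcal P_C(\mathbb Z^d)$. The required modification is therefore to show that $\pi_n$ eventually enters $V_\nu$ almost surely, which is the main obstacle in this adaptation.

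To establish this localisation step, I would first use (A'2)(ii) and (iv) to derive, via a Foster--Lyapunov argument, that $\sup_n \pi_n \cdot V^{1/q} < \infty$ almost surely, so that the sequence $(\pi_n)$ lies in some (random) $\mathcal P_C(\mathbb Z^d)$. By (A'2)(i), the sublevel sets of $V$ are relatively compact, giving tightness of $(\pi_n)$. Then, using the continuity of $Q$ from (A4), any weak limit point $\alpha$ of $(\pi_n)$ must be approximately invariant for the renormalised semigroup in an appropriate sense; by the attractor structure of $\nu$ encoded in (A'3), iterating a finite number of times brings $\alpha Q^k/\alpha Q^k(\mathbb Z^d)$ into $V_\nu$, and the stochastic approximation shows that $\pi_n$ does too. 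Once $\pi_n \in V_\nu$, (A'3) gives uniform contraction towards $\nu$, closing the loop.

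With $\pi_n \to \nu$ almost surely established, the remainder of~\cite{MV20}'s argument carries through unchanged: the $L^p$-moment bounds (A'2)(iii)--(iv) control the martingale increments $R^{\sss (n+1)}_{\xi(n+1)} - \pi_n R$, and a stochastic-approximation law of large numbers yields $m_n/n \to \nu R$ almost surely for the weak topology on $\mathcal P(\mathbb Z^d)$. The normalised statement $\tilde m_n \to \nu R/\nu R(\mathbb Z^d)$ is then immediate when $\nu R(\mathbb Z^d)>0$.
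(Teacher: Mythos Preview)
Your sketch diverges from the paper's proof at precisely the point that matters, and the replacement you propose has a real gap.

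The paper does \emph{not} work with $\pi_n = m_nP/m_nP(\mathbb Z^d)$ directly. Following~\cite{MV20}, it introduces the empirical measure of the sampled sites, $\tilde\eta_n = \frac1n\sum_{i=1}^n\delta_{\xi(i)}$, and writes it as a stochastic approximation
\[
\tilde\eta_{n+1} = \tilde\eta_n + \gamma_{n+1}\big(F(\tilde\eta_n)+U_{n+1}\big),\qquad F(\mu)=\mu Q-\mu Q(E)\,\mu,
\]
so that $(\tilde\eta_n)$ is an asymptotic pseudo-trajectory of the continuous-time flow $\dot\mu_t = F(\mu_t)$ on $\mathcal P_C(E)$ (this is Remark~12 of~\cite{MV20}, which does not use (A3)). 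The adaptation to (A'3) then consists of a single dynamical-systems observation: by Bena\"im's Theorem~3.7 the limit set of a pre-compact asymptotic pseudo-trajectory is internally chain transitive, and by his Proposition~5.3 such a set contains no proper attractor. Assumption~(A'3) says exactly that $\nu$ is an attractor for this flow whose basin is all of $\mathcal P_C(E)$, so the limit set must be $\{\nu\}$; the rest of~\cite{MV20} then gives $m_n/n\to\nu R$.

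Your alternative route --- ``show $(\pi_n)$ is tight, argue that any weak limit point $\alpha$ is approximately invariant, iterate $Q$ finitely many times to land in $V_\nu$, then contract'' --- breaks down at the middle steps. First, weak limit points of a stochastic approximation are in general \emph{not} approximately invariant or fixed for the driving map; what one knows is only that the limit \emph{set} is invariant and internally chain transitive for the flow, which is a much weaker and more subtle property (this is exactly why~\cite{Benaim99} is needed). Second, even granting some invariance, (A'3) provides only \emph{local} uniform convergence near $\nu$: nothing in (A'3) tells you that $\alpha Q^k/\alpha Q^k(\mathbb Z^d)$ reaches $V_\nu$ when $\alpha\notin V_\nu$, so your bootstrapping step ``iterating a finite number of times brings $\alpha Q^k/\alpha Q^k(\mathbb Z^d)$ into $V_\nu$'' is unsupported. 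The paper circumvents both issues in one stroke via the internally-chain-transitive/no-proper-attractor argument, which only needs that $\nu$ is a local attractor with full basin --- precisely the content of~(A'3).
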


\begin{proof}
We only need to explain how to adapt the proof of~\cite{MV20} to the case when
(A'3) holds instead of (A3). To do so, we need to introduce some notation from~\cite{MV20}:
for all $n\geq 0$, we let 
\[\eta_n = \sum_{i=1}^n \delta_{\xi(i)}.\]
We also let $\tilde \eta_0 = 0$ and, for all $n\geq 1$,
$\tilde \eta_n = \frac{\eta_n}{n}$.
The proof of~\cite{MV20} relies on the fact that $(\tilde\eta_n)_{n\geq 0}$ is a stochastic approximation: indeed, for all $n\geq 0$, one can write
\[\tilde \eta_{n+1} = \tilde \eta_n + \gamma_{n+1}\big(F(\tilde\eta_n) + U_{n+1}\big),\]
where $F(\mu) = \mu Q - \mu Q(E)\mu$, $U_{n+1} = \tilde\eta_n Q(E)\delta_{\xi(n+1)}-\tilde\eta_nQ$, and $\gamma_n = \frac1{n\tilde \eta_{n-1} Q(E)}$.

We now introduce some continuous-time interpolation of $(\tilde\eta_n)_{n\geq 0}$:
For all $n\geq 0$, we set $\tau_n = \gamma_1 + \cdots + \gamma_n$ and, for all $n\geq 0$, for all $t\in [\tau_n, \tau_{n+1}]$,
\[\tilde\mu_t = \tilde\eta_n + \frac{t-\tau_n}{\tau_{n+1}-\tau_n} (\tilde\eta_{n+1}-\tilde\eta_n).\]
Remark 12 in~\cite{MV20} states that, without Assumption (A3), one can still conclude that $(\tilde\mu_t)_{t\geq 0}$
is an asymptotic pseudo-trajectory in $\mathcal P_C(E)$ for the semi-flow induced in $\mathcal P_C(E)$ by the well-posed dynamical system 
\begin{equation}\label{eq:flow}
\frac{\mathrm d\mu_t \cdot f}{\mathrm dt} = \mu_t Q \cdot f - \mu_t Q(E)\mu_t \cdot f,
\end{equation}
(for all bounded functions $f = E\mapsto \mathbb R$).
Now, by Theorem~3.7 in~\cite{Benaim99}, 
the limit set of a pre-compact asymptotic pseudo-trajectory 
is internally chain transitive and thus (by~\cite[Proposition~5.3]{Benaim99}) 
it does not contain any proper attractor.
Now let $(X_t)_{t\geq 0}$ be the continuous-time pure jump Markov process with
sub-Markovian jump kernel $Q-I$; if, for all $t\geq 0$, we let $\mu_t$ be the distribution of $X_t$ conditioned on having survived until time $t$, then $(\mu_t)_{t\geq 0}$ is a solution of~\eqref{eq:flow}.
By Assumption (A'3), we get that $\nu$ is an attractor for the flow defined by~\eqref{eq:flow} on $\mathcal P_C(E)$. Also, all trajectories started in $\mathcal P_C(E)$ converge to~$\nu$. Thus, the limit set of $(\tilde\eta_t)_{t\geq 0}$ is $\{\nu\}$. 
Therefore, $\tilde\eta_n \to \nu$ almost surely as $n\uparrow\infty$, for the topology of weak convergence.
One can then follow the rest of the arguments in~\cite{MV20} to conclude the proof.
\end{proof}

\subsection{Checking Assumption (A'1)}
First note that, by definition (see Equation~\eqref{eq:defQ}), for all $x\in \mathbb Z^d$,
\[\kappa Q^{\sss (1)}_x = (2B_x -1)(1+\lambda_x)\delta_x + (1-B_x)(1+\lambda_{x+\Delta})\delta_{x+\Delta},\]
and thus
\ban
\kappa Q_x 
&= \Big(\frac{2\lambda_x}{1+\lambda_x}-1\Big)(1+\lambda_x)\delta_x + \frac{1}{1+\lambda_x}\mathbb E[(1+\lambda_{x+\Delta})\delta_{x+\Delta}]\notag\\
&= (\lambda_x-1)\delta_x + \frac{1}{1+\lambda_x}\mathbb E[(1+\lambda_{x+\Delta})\delta_{x+\Delta}].
\label{eq:Qx}
\ean
If $x = 0$, then $x+\Delta \neq 0$ almost surely and thus $\lambda_{x+\Delta} = \lambda$ almost surely, and hence,
\[\kappa Q_0(\mathbb Z^d) = \lambda_0-1 + \frac{1+\lambda}{1+\lambda_0}
= \lambda_0 - \frac{\lambda_0-\lambda}{1+\lambda_0}.\]
If $\|x\|_1 > 1$, then $x+\Delta \neq 0$ almost surely, and hence,
\[\kappa Q_x(\mathbb Z^d) = \lambda-1 + 1 = \lambda.\]
Finally, if $\|x\|_1 = 1$, then $x+\Delta = 0$ with probability $1/(2d)$, and hence, 
\[\kappa Q_x(\mathbb Z^d) = \lambda-1 +  \frac{1}{1+\lambda}\Big(\frac{2d-1}{2d} (1+\lambda) + \frac1{2d}(1+\lambda_0)\Big)
= \lambda + \frac{1}{2d\cdot (1+\lambda)}(\lambda_0-\lambda).\]
In summary,
\begin{equation}\label{eq:QE}
\kappa Q_x(\mathbb Z^d)
= \begin{cases}
\lambda_0 - \frac{1}{1+\lambda_0}(\lambda_0-\lambda) & \text{ if }x=0\\
\lambda + \frac{1}{{2d\cdot}(1+\lambda)}(\lambda_0-\lambda) & \text{ if }\|x\|_1=1\\
\lambda &\text{ if }\|x\|_1>1.
\end{cases}
\end{equation}
Note that, under the assumption that $\lambda_0>\lambda$ and $\lambda_0>2d-1+2d\lambda$, we have
\[\lambda_0 - \frac{\lambda_0-\lambda}{1+\lambda_0}
= \lambda + (\lambda_0-\lambda)\cdot \frac{\lambda_0}{1+\lambda_0}
> \lambda  + (\lambda_0-\lambda)\cdot \frac{2d-1+2d\lambda}{2d(1+\lambda)}
\geq \lambda + \frac{\lambda_0-\lambda}{2d(1+\lambda)},\]
because $2d-1+2d\lambda\geq 2d-1\geq 1$.
Thus the maximum of $x\mapsto \kappa Q_x(\mathbb Z^d)$ 
is reached at $x = 0$ and because, by definition, $\kappa = Q_0(\mathbb Z^d)$ (see~\eqref{eq:def_kappa}; this is why we chose this value for $\kappa$), we get
\begin{equation}\label{eq:maxQ}
\max_{x\in\mathbb Z^d} Q_x(\mathbb Z^d) 
= 1,
\end{equation}
as needed.
{Now note that, by definition, 
\[m_{n+1}P(\mathbb Z^d) = m_nP(\mathbb Z^d) + Q_{Y_{n+1}}^{\sss (n+1)}(\mathbb Z^d),\]
where, for all $x\in\mathbb Z^d$,
\[\mathbb P(Y_{n+1} = x|m_n) = \frac{m_nP(x)}{m_nP(\mathbb Z^d)}.\]
This implies
\begin{align}
m_nP(\mathbb Z^d) 
&= (1+\lambda_0) + \sum_{i=1}^n Q_{\xi(i)}^{\sss (i)}(\mathbb Z^d)\notag\\
&= (1+\lambda_0) + \sum_{i=1}^n \mathbb E_{i-1}[Q_{\xi(i)}^{\sss (i)}(\mathbb Z^d)] 
+ \sum_{i=1}^n \big(Q_{\xi(i)}^{\sss (i)}(\mathbb Z^d) - \mathbb E_{i-1}[Q^{\sss (i)}_{\xi(i)}(\mathbb Z^d)]\big),\label{eq:gnagna}
\end{align}
where $\mathbb E_{i-1}$ denotes the conditional expectation with respect to $m_1, \ldots, m_{i-1}$, for all $i\geq 1$. 
We treat the two sums above separately.
First note that, for all $i\geq 1$, by independence of $Q^{\sss (i)}$ and $m_1, \ldots, m_{i-1}$,
\begin{equation}\label{eq:repeat1}
\mathbb E_{i-1}[\kappa Q^{\sss (i)}_{\xi(i)}(\mathbb Z^d)] 
= \sum_{x\in\mathbb Z^d} \kappa Q_x(\mathbb Z^d)\cdot \frac{m_nP(x)}{m_nP(\mathbb Z^d)}
\geq \lambda + (\kappa-\lambda) \frac{m_nP(0)}{m_nP(\mathbb Z^d)}.
\end{equation}
Indeed, $\kappa Q_x(\mathbb Z^d)\geq \lambda$ for all $x\in \mathbb Z^d$ and $\kappa Q_0(\mathbb Z^d) = \kappa$.
By definition, $m_nP(0) = (1+\lambda_0)\Pi_{\tau_n}(0)$ and $m_nP(\mathbb Z^d) = (1+\lambda)N_{\tau_n} + (\lambda_0-\lambda)\Pi_{\tau_n}(0)\leq (1+\lambda_0)N_{\tau_n}$. Thus,
\[\liminf_{n\to\infty}\frac{m_nP(0)}{m_nP(\mathbb Z^d)}
\geq \liminf_{n\to\infty}\frac{\Pi_{\tau_n}(0)}{N_{\tau_n}}\geq \frac{\lambda_0-\lambda-1}{\lambda_0-\lambda},\]
almost surely,
by Theorem~\ref{prop:weakloc} (i) (because $\tau_n\to+\infty$ almost surely as $n\to+\infty$). 
Thus, almost surely,
\[\liminf_{n\to\infty} \mathbb E_{i-1}[\kappa Q^{\sss (i)}_{\xi(i)}(\mathbb Z^d)] 
\geq \lambda + (\kappa-\lambda) \frac{\lambda_0-\lambda-1}{\lambda_0-\lambda},
\]
which implies
\begin{equation}\label{eq:first_sum}
\liminf_{n\to\infty}\frac1n\sum_{i=1}^n \mathbb E_{i-1}[\kappa Q_{\xi(i)}^{\sss (i)}(\mathbb Z^d)]
\geq  \lambda + (\kappa-\lambda) \frac{\lambda_0-\lambda-1}{\lambda_0-\lambda}.
\end{equation}
We now control the second sum in~\eqref{eq:gnagna}: 
note that $(\Phi_n := \sum_{i=1}^n (Q_{\xi(i)}^{\sss (i)}(\mathbb Z^d) - \mathbb E_{i-1}[Q^{\sss (i)}_{\xi(i)}(\mathbb Z^d)]))_{n\geq 0}$ is a martingale.
Also,
\[\sup_{x\in \mathbb Z^d}\, \mathbb E\big|Q_x^{\sss (1)}(\mathbb Z^d)-Q_x(\mathbb Z^d)\big|^2<+\infty.\]
Indeed, for all $x\in\mathbb Z^d$, $Q_x^{\sss (1)}(\mathbb Z^d)$ is almost surely bounded (by~$-(1+\lambda_0)/\kappa$ from below, and by $2(1+\lambda_0)/\kappa$ from above).
This implies that
\[\langle \Phi\rangle_n 
= \sum_{i=1}^n \mathbb E_{i-1}\big[(Q_{\xi(i)}^{\sss (i)}(\mathbb Z^d) - \mathbb E_{i-1}[Q^{\sss (i)}_{\xi(i)}(\mathbb Z^d)])^2\big]
\leq n\sup_{x\in\mathbb Z^d} \mathbb E\big|Q_x^{\sss (1)}(\mathbb Z^d)-Q_x(\mathbb Z^d)\big|^2
=\mathcal O(n),\]
almost surely as $n\to+\infty$.
Thus, by the law of large numbers for martingales (see, e.g.\ \cite[Theorem~1.3.15]{Duflo}), $\lim_{n\to\infty}\Phi_n/n = 0$ almost surely.
Using this and~\eqref{eq:first_sum} in~\eqref{eq:gnagna} gives
\begin{equation}\label{eq:first_fix}
\liminf_{n\to\infty}\frac{m_nP(\mathbb Z^d)}{n}\geq \frac{\lambda}{\kappa} + \bigg(1-\frac\lambda\kappa\bigg) \frac{\lambda_0-\lambda-1}{\lambda_0-\lambda}.
\end{equation}
For $\liminf_{n\uparrow\infty} \frac1n \sum_{i=1}^n Q_{\xi(i)}(\mathbb Z^d)$, we reason similarly:
first note that, for all $n\geq 1$,
\begin{equation}\label{eq:two_sums2}
\frac1n \sum_{i=1}^n Q_{\xi(i)}(\mathbb Z^d)
= \frac1n \sum_{i=1}^n \mathbb E_{i-1}[Q_{\xi(i)}(\mathbb Z^d)]
+ \frac1n \sum_{i=1}^n \big(Q_{\xi(i)}(\mathbb Z^d)-\mathbb E_{i-1}[Q_{\xi(i)}(\mathbb Z^d)]\big).
\end{equation}
We look at these two sums separately: for the first sum, note that
\[\frac1n \sum_{i=1}^n \mathbb E_{i-1}[\kappa Q_{\xi(i)}(\mathbb Z^d)]
= \frac1n \sum_{i=1}^n \sum_{x\in\mathbb Z^d} \kappa Q_x(\mathbb Z^d) \frac{m_nP(x)}{m_nP(\mathbb Z^d)} 
= \frac1n \sum_{i=1}^n \mathbb E_{i-1}[\kappa Q^{\sss (i)}_{\xi(i)}(\mathbb Z^d)],\]
by~\eqref{eq:repeat1}. Thus, by~\eqref{eq:first_sum}, we get
\begin{equation}\label{eq:first_sum2}
\liminf_{n\to\infty}\frac1n \sum_{i=1}^n \mathbb E_{i-1}[\kappa Q_{\xi(i)}(\mathbb Z^d)]
\geq  \lambda + (\kappa-\lambda) \frac{\lambda_0-\lambda-1}{\lambda_0-\lambda}.
\end{equation}
For the second sum in~\eqref{eq:two_sums2}, we note that $(\hat\Phi_n := \sum_{i=1}^n (Q_{\xi(i)}(\mathbb Z^d)-\mathbb E_{i-1}[Q_{\xi(i)}(\mathbb Z^d)]))_{n\geq 0}$ is a martingale such that
\[\langle \hat\Phi\rangle_n
= \sum_{i=1}^n \mathbb E_{i-1}\big[\big(Q_{\xi(i)}(\mathbb Z^d)-\mathbb E_{i-1}[Q_{\xi(i)}(\mathbb Z^d)]\big)^2\big]
\leq 4 n\sup_{x\in\mathbb Z^d} Q_x(\mathbb Z^d)^2 = \mathcal O(n).\]
Thus, almost surely as $n\uparrow\infty$, $\hat\Phi_n = o(n)$. Thus, by~\eqref{eq:two_sums2} and~\eqref{eq:first_sum2}, we get that, almost surely as $n\uparrow\infty$,
\[\liminf_{n\to\infty}\frac1n \sum_{i=1}^n Q_{\xi(i)}(\mathbb Z^d)
\geq  \frac{\lambda}{\kappa} + \bigg(1-\frac{\lambda}{\kappa}\bigg) \frac{\lambda_0-\lambda-1}{\lambda_0-\lambda}.\]
Together with~\eqref{eq:first_fix}, this implies that (A'1) holds with
\begin{equation}\label{eq:def_c}
c = \frac{\lambda}{\kappa} + \bigg(1-\frac\lambda\kappa\bigg) \frac{\lambda_0-\lambda-1}{\lambda_0-\lambda}.
\end{equation}

\begin{remark}\label{remark:A'1} 
Because $\min_{x\in\mathbb Z^d} Q_x(\mathbb Z^d) = \lambda$, it would have been much easier to prove that \cite[Assumption (A1)]{MV20} holds with $c = \lambda/\kappa$, but this constant is not large enough for (A'2) to hold.
This is the reason why we need to first prove the weak localisation result in Theorem~\ref{prop:weakloc} (i) before using MVPPs to get a strong localisation result. 
\end{remark}
}

\subsection{Checking Assumption (A'2)}\label{sub:A2}
We let $V(x) = \|x\|_1$ if $x\neq {\bf 0}$, and $V(0)=1$.

(i) For all $N\geq 0$, $\{x\in\mathbb Z^d\colon \|x\|_1\leq N\}$ is finite and thus compact.

(ii) By Equation~\eqref{eq:Qx}, for all $x\in\mathbb Z^d$,
\[\kappa Q_x V 
= (\lambda_x-1)V(x) + \frac{1}{1+\lambda_x}\mathbb E[(1+\lambda_{x+\Delta})V(x+\Delta)].\]
In particular, for all $x\in\mathbb Z^d$ such that $\|x\|_1>1$,
\[\kappa Q_x V = (\lambda-1)V(x) +  \mathbb E[V(x+\Delta)].\]
For all $\|x\|_1>1$, we have $x+\Delta \neq {\bf 0}$ and thus, by the triangular inequality,
\[V(x+\Delta) = \|x+\Delta\|_1\leq \|x\|_1 + \|\Delta\|_1 = \|x\|_1+1.\]
Therefore,
\[\kappa Q_x V  
{\le} (\lambda-1)V(x) +  V(x)+1
= \lambda V(x) +1.\]
We thus get that, for all $x\in\mathbb Z^d$, 
\begin{equation}\label{eq:Lyap1}
\kappa Q_x V \leq \lambda V(x) + 1 + \max_{\|x\|_1\leq 1} \kappa Q_x V.
\end{equation}
Similarly, for all $\alpha\in (\nicefrac12, 1)$, for all $x\in\mathbb Z^d$ such that $\|x\|_1>1$,
\[\kappa Q_x V^\alpha 
= (\lambda-1)V(x)^\alpha + \mathbb E[V(x+\Delta)^\alpha].\]
Almost surely, for all $x\in\mathbb Z^d$ such that $\|x\|_1{>1}$, 
\[V(x+\Delta)^\alpha = \|x+\Delta\|^\alpha_1\leq (\|x\|_1 + 1)^\alpha.\]
As $\|x\|_1\uparrow\infty$, $(\|x\|_1 + 1)^\alpha\sim \|x\|_1^\alpha$.
We fix $\varepsilon>0$ small enough so that $(\lambda+\varepsilon)/\kappa<c$ 
(this is possible because $\lambda/\kappa<c$ - see~\eqref{eq:def_c}). 
Then, we choose $L(\varepsilon, \alpha)>1$ large enough so that $\|x\|_1\geq L(\varepsilon, \alpha)$ implies 
$(\|x\|_1 + 1)^\alpha\leq (1+\varepsilon)\|x\|_1^\alpha$.
With these choices, for all $x\in\mathbb Z^d$ such that $\|x\|_1\geq L(\varepsilon, \alpha)$,
\[V(x+\Delta)^\alpha = \|x+\Delta\|^\alpha_1\leq (1+\varepsilon) V(x)^\alpha,\]
and thus
\[\kappa Q_x V^\alpha 
\leq (\lambda-1)V(x)^\alpha +(1+\varepsilon) V(x)^\alpha
= (\lambda+\varepsilon) V(x)^\alpha.\]
This implies
\begin{equation}\label{eq:Lyap2}
\kappa Q_x V^\alpha 
\leq (\lambda+\varepsilon) V(x)^\alpha + \max_{\|x\|_1\leq L(\varepsilon)} V(x)^\alpha.
\end{equation}
By Equations~\eqref{eq:Lyap1} and~\eqref{eq:Lyap2}, we get that (ii) holds for any $q = \nicefrac1\alpha \in (1,2)$: indeed, one can take $\theta = (\lambda+\varepsilon)/\kappa<c$ and $K = \max_{\|x\|_1\leq L(\varepsilon,\alpha)} V(x)^\alpha/\kappa$.

(iii) Let $f : \mathbb Z^d \to \mathbb R$ be a function bounded by $1$.
First note that, for all $x\in\mathbb Z^d$, almost surely, by the triangular inequality and because $B_x\in \{0,1\}$,
\[|{\kappa}R^{\sss (1)}_x f| = |(2B_x -1)f(x) + (1-B_x)f(x+\Delta)|
\leq 2\|f\|_\infty\leq 2.\]
Similarly, 
\ba
|\kappa Q^{\sss (1)}_x f| 
&= |(2B_x -1)(1+\lambda_x)f(x) + (1-B_x)(1 +\lambda_{x+\Delta})f(x+\Delta)|\\
&\leq 2(1+\lambda_0)\|f\|_\infty \leq 2(1+\lambda_0).
\ea
Using Jensen's inequality, these imply that
\[|R_x \cdot f|\leq \mathbb E[|R^{\sss (1)}_x\cdot f|] \leq 2/\kappa
\quad\text{ and }\quad
|Q_x\cdot f|\leq \mathbb E[|Q^{\sss (1)}_x\cdot f|]\leq 2(1+\lambda_0)/\kappa.\]
Thus,
\[|Q_x\cdot f|^2 \leq \bigg(\frac{2(1+\lambda_0)}{\kappa}\bigg)^2 
\leq \frac{4(1+\lambda_0)^2}{\kappa^2}
\leq \frac{4(1+\lambda_0)^2}{\kappa^2}\cdot V(x),\]
because $V(x)\geq 1$ for all $x\in\mathbb Z^d$.
Now, for all $r>1$, using first the triangular inequality and then the fact that $x\mapsto x^r$ is convex on $[0,\infty)$ and thus $(a+b)^r\leq 2^{r-1}(a^r+b^r)$ for all $a,b\in[0,\infty)$, we get
\ba
\mathbb E \big[\big| {R}^{\sss (1)}_{x} \cdot f - R_{x}\cdot f\big|^r\big]
&\leq \mathbb E\big[\big(\big| {R}^{\sss (1)}_{x} \cdot f\big|+\big|R_{x}\cdot f\big|\big)^r\big]
\leq 2^{r-1} \big(\mathbb E\big[\big| {R}^{\sss (1)}_{x} \cdot f\big|^r\big]
+\big|R_{x}\cdot f\big|^r \big)\\
&\leq 2^{r} (2/\kappa)^r \leq (\nicefrac4\kappa)^r.
\ea
Thus, for any $r\in (1,2)$,
\[\mathbb E \big[\big| {R}^{\sss (1)}_{x} \cdot f - R_{x}\cdot f\big|^r\big]
\leq (1\vee \nicefrac4\kappa)^r \leq  (1\vee \nicefrac4\kappa)^2 V(x).\]
Similarly,
\[\mathbb E \big[\big| {Q}^{\sss (1)}_{x} \cdot f - Q_x\cdot f\big|^4\big]
\leq 2^{4}((1+\lambda_0)/\kappa)^4 
\leq 16((1+\lambda_0)/\kappa)^4 V(x).\]
Thus, (iii) holds for $q' = 2$, $p= 4$ and any $r\in (1,2)$. 

(iv) For all $q\geq1$,
for all $x\in\mathbb Z^d$, by the triangular inequality,
\begin{align*}
|\kappa Q^{\sss (1)}_x V^{\nicefrac1q}|^q
&= |(2B_x-1)(1+\lambda_x)V(x)^{\nicefrac1q}+(1-B_x)(1+\lambda_{x+\Delta})V(x+\Delta)|^q\\
&\leq (1+\lambda_0)^q(V(x)^{\nicefrac1q} + V(x+\Delta)^{\nicefrac1q})^q\\
&\leq 2^{q-1}(1+\lambda_0)^q(V(x) + V(x+\Delta)),
\end{align*}
because $q\geq1$ and thus $x\mapsto x^q$ is convex on $[0,\infty)$, 
implying that $(a+b)^q\leq 2^{q-1}(a^q+ b^q)$ for all $a,b\in[0,\infty)$.
We have proved before that, for all $x\in\mathbb Z^d$, $V(x+\Delta)\leq V(x)+1$ almost surely. 
This implies that, almost surely,
\begin{equation}\label{eq:bla1}
|\kappa Q^{\sss (1)}_x V^{\nicefrac1q}|^q\leq 2^{q-1}(1+\lambda_0)^q(2V(x)+1) 
\leq 2^{q+1}(1+\lambda_0)^qV(x),
\end{equation}
because $1\leq V(x)\leq 2V(x)$.
By Jensen's inequality, because $|\cdot|^q$ is convex,
\begin{equation}\label{eq:bla2}
|Q_x V^{\nicefrac1q}|^q
\leq \mathbb E[|Q^{\sss (1)}_x V^{\nicefrac1q}|^q]
\leq 2^{q+1}(1+\lambda_0)^qV(x)/\kappa^q.
\end{equation}
In particular, taking $q=1$ gives
\begin{equation}\label{eq:q=1}
|Q_x V|\leq 4(1+\lambda_0)V(x)/\kappa.
\end{equation}
For all $q\in (1,2)$, using the triangular inequality and the convexity of $x\mapsto x^q$ we get that, almost surely,
\[\left|Q^{\sss (1)}_x \cdot V^{\nicefrac1q}-Q_x\cdot V^{\nicefrac1q}\right|^q
\leq \big(\big|Q^{\sss (1)}_x \cdot V^{\nicefrac1q}\big|+\big|Q_x\cdot V^{\nicefrac1q}\big|\big)^q
\leq 2^{q-1}(\big|Q^{\sss (1)}_x \cdot V^{\nicefrac1q}\big|^q+\big|Q_x\cdot V^{\nicefrac1q}\big|^q).\]
Thus, by~\eqref{eq:bla1} and~\eqref{eq:bla2},
\[\left|Q^{\sss (1)}_x \cdot V^{\nicefrac1q}-Q_x\cdot V^{\nicefrac1q}\right|^q
\leq 2^q( 4(1+\lambda_0)/\kappa)^q V(x).\]
Thus, (iv) holds for any $r = q\in (1,2)$, 
and it thus holds for $r = q = \frac{p}{p-1} = \frac43$.

In total, we have showed that (A'2) holds for $q'=2$, $p=4$ and $q = r = \nicefrac43$.

\subsection{Checking Assumption (A'3)}\label{sub:A3}
To check Assumption (A'3), we apply a result of Champagnat and Villemonais (see~\cite[Theorem~5.1]{CV23}). 
We first give a statement of this result (this statement is a simplification of the original version in \cite{CV23}, which is enough for our purposes):
Let $(X(t))_{t\geq 0}$ be a continuous-time Markov process 
on a space~$E\cup\{\partial\}$, absorbed at $\partial$, 
with jump rates given by $(q_{x,y})_{x,y\in E\cup\{\partial\}}$ 
satisfying $\sum_{y\in E\cup\{\partial\}}q_{x,y}<\infty$ for all $x\in E$.
The infinitesimal generator of $(X(t))_{t\geq 0}$ acts on non-negative functions $f : E\cup\{\partial\} \to [0,\infty)$ satisfying $\sum_{y\in E\cup\{\partial\}} q_{x,y}f(y)<\infty$ for all $x\in E$, as follows: $\mathcal L f(\partial) = 0$ and, for all $x\in E$,
\[\mathcal Lf(x) = \sum_{y\in E\cup\{\partial\}} q_{x,y}(f(y)-f(x)).\]

\begin{theorem}[{\cite[Theorem~5.1]{CV23}}]\label{th:CV}
Assume that there exists a finite set $L\subset E$ 
such that $\mathbb P_x(X(1) = y) >0$ for all $x,y\in L$, and such that the constant 
\[\eta_2 
= \inf\{\eta>0 \colon \liminf_{t\uparrow\infty}\mathrm e^{\eta t} \mathbb P_x(X(t) = x)>0\}\]
is finite and does not depend on $x\in L$.
If there exists $C>0$, $\eta_1>\eta_2$ and $\varphi \colon E\cup \{\partial\}\to [0,\infty)$
such that $\varphi_{|E} \geq 1$, $\varphi(\partial) = 0$, $\sum_{y\in E\setminus\{x\}}q_{x,y} \varphi(y)<\infty$ for all $x\in E$, and
\[\mathcal L\varphi(x)\leq -\eta_1 \varphi(x)+ C{\bf 1}_{x\in L},\]
then
the process $(X(t))_{t\geq 0}$ admits a quasi-stationary distribution $\nu_{\mathit{QSD}}$, 
which is the unique one satisfying $\nu_{\mathit{QSD}}\varphi <\infty$ and $\mathbb P_{\nu_{\mathit{QSD}}}(X(t)\in L)>0$ for some $t\geq 0$.
Moreover, there exist constants $\alpha\in (0,1)$ and $K>0$ such that, for all probability measures $\mu$ on $E$ such that $\int\varphi\,\mathrm d\mu <\infty$ and $\mu(L)>0$,
\begin{equation}\label{eq:CV}
\|\mathbb P_\mu(X(t)\in \cdot | X(t)\neq \partial)- \nu_{\mathit{QSD}}\|_{\mathit{TV}}
\leq K \alpha^t \cdot \frac{\int\varphi\,\mathrm d\mu}{\mu(L)}.
\end{equation}
Furthermore, if we set $\varpi_t(x) = \mathrm e^{\eta_2 t}\mathbb P_x(X(t)\neq \partial)$, then there exists a function $\varpi : \mathbb E\mapsto [0,\infty)$ such that
\[\|\varpi_t-\varpi\|_{L^\infty(\varphi)} \to 0,\]
where 
\[L^{\infty}(\varphi) = \{f:E\mapsto R \colon \|f\|_{L^\infty(\varphi)}:=\sup_{x\in E} |f(x)|/\varphi(x)<\infty\}.\]
\end{theorem}

\begin{remark} 
Equation~\eqref{eq:CV} follows from taking $n_0 = 0$ in the definition of $\psi_2$ in~\cite[Theorem~5.1, see also Theorem~3.1]{CV23}; a private communication by Denis Villemonais confirmed to us that ``for all large enough $n_0$'' in the conclusion of~\cite[Theorem~3.1]{CV23} should be replaced by ``for any $n_0$''. In particular, the choice of $n_0 = 0$ is valid.
\end{remark}

We first prove that the assumptions of Theorem~\ref{th:CV} are satisfied:
Recall from~\eqref{eq:Qx} that, for all $x\in\mathbb Z^d$,
\begin{align*}
\kappa Q_x 
&= (\lambda_x-1)\delta_x + \frac{1}{1+\lambda_x}\mathbb E[(1+\lambda_{x+\Delta})\delta_{x+\Delta}]\\[5pt]
&=\begin{cases}
(\lambda_0-1)\delta_0 + \frac{1+\lambda}{1+\lambda_0}\mathbb E[\delta_{x+\Delta}] 
& \text{ if }x=0\\[5pt]
(\lambda-1)\delta_x + \mathbb E[\delta_{x+\Delta}]
+ \frac{1}{2d}\frac{\lambda_0-\lambda}{1+\lambda}\delta_0
& \text{ if }\|x\|_1=1\\[5pt]
 (\lambda-1)\delta_x +  \mathbb E[\delta_{x+\Delta}]
 & \text{ if }\|x\|_1>1.
\end{cases}
\end{align*}
Instead of considering the jump process $X$ of generator $(Q_x - \delta_x + (1-Q_x(\mathbb Z^d))\delta_\partial)_{x\in\mathbb Z^d}$, we look at the jump process $Y$ of generator $(\kappa Q_x - \kappa\delta_x + \kappa(1-Q_x(\mathbb Z^d))\delta_\partial)_{x\in\mathbb Z^d}$. 
One can couple $X$ and $Y$ so that $(X(\kappa t))_{t\geq 0} = (Y(t))_{t\geq0}$.

Using the definition of $\kappa$ (see~\eqref{eq:def_kappa}) and~\eqref{eq:QE}, we get that,
for all $x\in \mathbb Z^d$,
\begin{align*}
&\kappa Q_x - \kappa \delta_x +\kappa(1-Q_x(\mathbb Z^d))\delta_\partial\\
&= \begin{cases}
(\lambda_0-1-\kappa)\delta_0 
+ \frac{1+\lambda}{1+\lambda_0}\mathbb E[\delta_{x+\Delta}] 
& \text{ if }x=0\\[5pt]
(\lambda-1-\kappa)\delta_x + \mathbb E[\delta_{x+\Delta}]
+ \frac{1}{2d}\frac{\lambda_0-\lambda}{1+\lambda}\delta_0
+(\lambda_0-\lambda)\big(1-\frac1{1+\lambda_0}-\frac1{2d}\frac1{1+\lambda}\big)\delta_\partial
& \text{ if }\|x\|_1=1\\[5pt]
 (\lambda-1-\kappa)\delta_x +  \mathbb E[\delta_{x+\Delta}]
 +(\lambda_0-\lambda)\big(1-\frac1{1+\lambda_0}\big)\delta_\partial
 & \text{ if }\|x\|_1>1.
\end{cases}\\[5pt]
&= \begin{cases}
-\frac{1+\lambda}{1+\lambda_0}\delta_0 
+ \frac{1+\lambda}{1+\lambda_0}\mathbb E[\delta_{x+\Delta}] 
& \text{ if }x=0\\[5pt]
-\big(\lambda_0-\lambda+\frac{1+\lambda}{1+\lambda_0}\big)\delta_x 
+  \mathbb E[\delta_{x+\Delta}]
+ \frac{1}{2d}\frac{\lambda_0-\lambda}{1+\lambda}\delta_0
+(\lambda_0-\lambda)\big(1-\frac1{1+\lambda_0}-\frac1{2d}\frac1{1+\lambda}\big)\delta_\partial
& \text{ if }\|x\|_1=1\\[5pt]
-\big(\lambda_0-\lambda+\frac{1+\lambda}{1+\lambda_0}\big)\delta_x 
+  \mathbb E[\delta_{x+\Delta}]
 +(\lambda_0-\lambda)\big(1-\frac1{1+\lambda_0}\big)\delta_\partial
 & \text{ if }\|x\|_1>1.
\end{cases}
\end{align*}
In other words, the process $Y$ describes the movement on $\mathbb Z^d$ of a particle that behaves as follows:
\begin{itemize}
\item When the particle sits at 0, it jumps at rate $\frac{1+\lambda}{1+\lambda_0}$, and when it jumps, it moves to a neighbouring site chosen uniformly at random among the $2d$ possible choices.
\item When the particle sits at $x$ and $\|x\|_1 = 1$, it jumps to a uniformly-chosen neighbouring site at rate~$1$, it jumps to $0$ with an additional rate of $\frac{1}{2d}\frac{\lambda_0-\lambda}{1+\lambda}$, and it dies at rate $(\lambda_0-\lambda)\big(1-\frac1{1+\lambda_0}-\frac1{2d}\frac1{1+\lambda}\big)$.
\item When the particle sits at $x$ and $\|x\|_1 >1$, it jumps to a uniformly-chosen neighbouring site at rate~$1$ and it dies at rate $(\lambda_0-\lambda)\big(1-\frac1{1+\lambda_0}\big)$.
\end{itemize}
Note in particular that the dying rate is the largest when $\|x\|_1>1$ and the lowest at $x = 0$ (in fact, the particle does not die when it sits at the origin). Also note the additional drift towards zero when the particle sits at a neighbouring site of the origin.

To prove (A'3), we aim at applying Theorem~\ref{th:CV} and thus check that
$(Y(t))_{t\geq 0}$ satisfies its assumptions.
We fix
\begin{equation}\label{eq:def_rho1}
\rho_1
= (\lambda_0-\lambda)\Big(1-\frac1{1+\lambda_0}\Big) 
= \frac{\lambda_0(\lambda_0-\lambda)}{(1+\lambda_0)},
\end{equation}
and
\begin{equation}\label{eq:def_rho2}
\rho_2 = (\lambda_0 -\lambda)\Big(1-\frac1{1+\lambda_0}-\frac1{2d}\cdot\frac1{1+\lambda}\Big)+\frac{2d-1}{2d}.
\end{equation}
Note that $\rho_2<\rho_1$; indeed, this is equivalent to
\[(\lambda_0 -\lambda)\Big(1-\frac1{1+\lambda_0}-\frac1{2d}\cdot\frac1{1+\lambda}\Big)+\frac{2d-1}{2d}
<(\lambda_0-\lambda)\Big(1-\frac1{1+\lambda_0}\Big)\]
which is equivalent to~$2d-1+2d\lambda<\lambda_0$, which is our assumption.
We choose $\varepsilon>0$ small enough so that $\rho_2<\rho_1-\varepsilon$.
We then choose $K>\nicefrac1\varepsilon$ and let
$L = \{x\in\mathbb Z^d\colon \|x\|_1\leq K\}$. 
We also let $\varphi(x) = \max(1, \|x\|_1^{\nicefrac34})$, for all $x\in\mathbb Z^d$ (and $\varphi(\partial) = 0$, as required).

First note that, for any $x,y\in L$, and $t\geq 2$, by Markov's property,
\[\mathbb P_x(Y(t) = x)
\geq \mathbb P_x(Y(1) = y)\mathbb P_y(Y(t-2) = y)\mathbb P_y(Y(1) = x).\]
Thus, if we let $\frak m = \min_{x,y\in L}\mathbb P_x(Y(1) = y)$, then
\[\liminf_{t\uparrow\infty}\mathrm e^{\eta t}\mathbb P_x(X(t) = x) 
\geq \frak m^2 \mathrm e^{-2\eta} 
\liminf_{t\uparrow\infty}\mathrm e^{\eta t}\mathbb P_y(X(t) = y),
\]
which implies that $\eta_2$ does not depend on $x\in L$; it only remains to show that it is finite.
Now note that, if we let $L_0 = \{\|x\|\leq 1\}$ then, for all $x\in L$, $t\geq 2$,
\ban
\mathbb P_x(Y(t) = x)
&\geq \sum_{y, z\in L_0} \mathbb P_x(Y(1) = y)\mathbb P_y(Y(t-2) = z)\mathbb P_z(Y(1) = x)\notag\\
&\geq \frak m^2 \sum_{y,z\in L_0} \mathbb P_y(Y(t-2) = z)
= \frak m^2 \sum_{y\in L_0} \mathbb P_y(Y(t-2)\in L_0).\label{eq:L0}
\ean
Note that $\mathbb P_y(Y(t-2)\in L_0)\geq \mathbb P_y(Y(s)\in L_0\text{ for all }s\leq t-2)$.
Because the jump rate from a site $y$ such that $\|y\| = 1$ outside of $L_0$ is $\rho_2$ (see~\eqref{eq:def_rho2} for the definition of $\rho_2$), and because the process has to visit one of these sites before exiting $L_0$, we have that
\[\mathbb P_y(Y(s)\in L_0\text{ for all }s\leq t-2)\geq \mathrm e^{-\rho_2 (t-2)}.\]
This implies that, for all $x\in L$,
\[\mathbb P_x(Y(t) = x)
\geq \frak m^2 \mathrm e^{-\rho_2(t-2)},\]
which implies that $\liminf_{t\uparrow\infty} \mathrm e^{\rho_2 t}\mathbb P_x(Y(t) = x) \geq  \frak m^2 \mathrm e^{2\rho_2}>0$. Thus,
\[\eta_2 \leq \rho_2<\infty.\]

We now let $\mathcal L$ be the infinitesimal generator of $(Y(t))_{t\geq 0}$. Recall that $\varphi(x) = \max(1, \|x\|_1)$, for all $x\in\mathbb Z^d$. From now on, we write $\|\cdot\|$ in lieu of $\|\cdot\|_1$.
Because the death rate outside of $L$ equals $\rho_1$ (see~\eqref{eq:def_rho1} for the definition of $\rho_1$), we get that, for all $x\notin L$, for all $t\geq 0$,
\[\mathcal L \varphi(x) 
=\sum_{i=1}^d \frac{1}{2d} \big(\|x+e_i\|^{\nicefrac34}+\|x-e_i\|^{\nicefrac34}-2\|x\|^{\nicefrac34}\big)
- \rho_1 \|x\|^{\nicefrac34}.\] 
By symmetry, 
we may assume without loss of generality that $x\in \mathbb N_{\geq 0}^d$.
Under this assumption, $\|x+e_i\| = \|x\|+1$ for all $1\leq i\leq d$, and $\|x-e_i\| = \|x\|-1$ if $x_i\neq 0$, $\|x-e_i\| = \|x\|+1$ if $x_i = 0$. 
This gives that, for all $x\in\mathbb N_{\geq 0}^d$, for all $1\leq i\leq d$,
\[\|x+e_i\|^{\nicefrac34} + \|x-e_i\|^{\nicefrac34} -2\|x\|^{\nicefrac34}
\leq 2(\|x\|+1)^{\nicefrac34}-2\|x\|^{\nicefrac34},\]
which implies
\ba
\mathcal L \varphi(x) 
&\leq (\|x\|+1)^{\nicefrac34}-(1+\rho_1)\|x\|^{\nicefrac34} 
= \Big(\Big(1+\frac1{\|x\|}\Big)^{\nicefrac34}-1-\rho_1\Big) \|x\|^{\nicefrac34}\\
&\leq -(\rho_1 - \nicefrac1K) \|x\|^{\nicefrac34}
\leq -(\rho_1-\varepsilon)\varphi(x),
\ea
as long as $\|x\|>K$.
We thus get
\[\mathcal L \varphi(x) \leq -\eta_1 \varphi(x) + C{\bf 1}_{x\in L},\]
where we have set $C = \max_{x\in L} \mathcal L \varphi(x)<\infty$ and $\eta_1 = \rho_1-\varepsilon$. 
Recall that we have chosen $\varepsilon$ such that $\eta_2 \leq  \rho_2<\rho_1-\varepsilon = \eta_1$. 
We have thus shown that the assumptions of Theorem~\ref{th:CV} hold for $L = \{\|x\|\leq K\}$ for some $K$ large enough and $\varphi = \max(1, \|\cdot\|^{\nicefrac34}) = V^{\nicefrac1q}$ (for the choices of $V$ and $q$ made in Section~\ref{sub:A2}).
We thus get that $Y$ admits a quasi-stationary distribution $\nu$ and that, for any $\mu\in\mathcal P_C(\mathbb Z^d)$ such that $\|\mu-\nu\|_{TV}\le \nu(L)/2$,
\[\|\mathbb P_\mu(Y(t)\in \cdot | Y(t)\neq \partial)- \nu\|_{\mathit{TV}}
\leq C \alpha^t \cdot \frac{\int\varphi\,\mathrm d\mu}{\mu(L)}
\leq C \alpha^t \cdot \frac{C}{\nu(L)/2}.\]
Because $X(t) = Y(\nicefrac t\kappa)$, we get that
\[\|\mathbb P_\mu(X(t)\in \cdot | X(t)\neq\partial)- \nu\|_{\mathit{TV}}
= \|\mathbb P_\mu(Y(\nicefrac t\kappa)\in \cdot | Y(\nicefrac t\kappa)\neq\partial)- \nu\|_{\mathit{TV}}
\leq C \alpha^{t/\kappa} \cdot \frac{C}{\nu(L)/2}.\]
Thus, to prove that (A'3) holds, it only remains to prove that, for all $\alpha\in\mathcal P_C(\mathbb Z^d)$, $\mathbb P_\alpha(X(t)\neq\partial)\geq \mathrm e^{-(1-c_0)t}$, i.e.\ that
\begin{equation}\label{eq:but}
\mathbb P_\alpha(Y(t)\neq\partial)\geq \mathrm e^{-\kappa(1-c_0)t},
\end{equation}
for some constant $c_0>\theta = \nicefrac\lambda\kappa$.
If we set $\varpi_t(x) = \mathrm e^{\eta_2 t}\mathbb P_x(Y(t)\neq\partial)$ for all $t\geq 0$ and $x\in\mathbb Z^d$, then 
\[\mathrm e^{\eta_2 t}\mathbb P_\alpha(Y(t)\neq\partial) 
= \alpha\cdot \varpi \bigg(= \int_{\mathbb Z^d} \varpi_t \mathrm d\alpha\bigg).\]
We apply the dominated convergence theorem to this integral:
To do so, first note that, by Theorem~\ref{th:CV}, there exists a function $\varpi$ such that $\|\varpi_t-\varpi\|_{L^{\infty}(\varphi)}\to 0$ as $t\uparrow\infty$. This implies that, for all $\varepsilon>0$, there exists $t_\varepsilon$ such that, for all $t\geq t_\varepsilon$, for all $x\in\mathbb Z^d$,
\begin{equation}\label{eq:cvLinfty}
|\varpi_t(x)-\varpi(x)|\leq \varepsilon\varphi(x).
\end{equation}
Thus
\[\big|\alpha \cdot (\varpi_t-\varpi) \big|
\leq \varepsilon \alpha\cdot \varphi \leq C\varepsilon,\]
because $\alpha\in\mathcal P_C(\mathbb Z^d)$.
This implies
\[0\leq\alpha\cdot \varpi_t
= \alpha\cdot \varpi  +\alpha\cdot (\varpi_t-\varpi)
\leq \|\varphi\|_{L^\infty(\varphi)} \alpha\cdot \varphi + C\varepsilon
<\infty,\]
because $\varpi\in L^{\infty}(\varphi)$ (as a limit of functions in $L^{\infty}(\varphi)$) and $\alpha\in\mathcal P_C(\varphi)$.
Thus, by dominated convergence, we get that
\begin{equation}\label{eq:nearly}
\mathrm e^{\eta_2 t}\mathbb P_\alpha(Y(t)\neq\partial) 
= \alpha\cdot \varpi_t  \to 
\alpha\cdot \varpi.
\end{equation}
We now prove that $\alpha\cdot\varpi>0$; this holds because $\varpi(x)>0$ for all $x\in \mathbb Z^d$. Indeed, reasoning as in~\eqref{eq:L0}, we have that, for all $x\in\mathbb Z^d$, for all $t>2$,
\begin{align*}
\mathbb P_x(Y(t)\neq\partial)
&\geq \mathbb P_x(Y(t) = x)
\geq \sum_{z\in L_0} \mathbb P_x(Y(1) = 0) \mathbb P_0(Y(t-2) = z) \mathbb P_z(Y(1) = x)\\
&\geq \mathbb P_x(Y(1) = 0) \big(\min_{z\in L_0} \mathbb P_z(Y(1) = x)\big)
\mathbb P_0(Y(s)\in L_0 \text{ for all }0\leq s\leq t-2)\\
&\geq \mathbb P_x(Y(1) = 0) \big(\min_{z\in L_0} \mathbb P_z(Y(1) = x)\big) 
\mathrm e^{-\rho_2(t-2)}.
\end{align*}
Because $\rho_2 = \eta_2$,
\[\varpi(x) = \lim_{t\uparrow\infty} \mathrm e^{\eta_2 t} \mathbb P_x(Y(t)\neq\partial)
\geq \mathbb P_x(Y(1) = 0) \big(\min_{z\in L_0} \mathbb P_z(Y(1) = x)\big)\mathrm e^{2\eta_2}>0.\]
This concludes the proof that $\alpha\cdot\varpi>0$. 
Thus, \eqref{eq:nearly} implies that there exists $\varepsilon>0$ such that, for all $t\geq 0$,
\[\mathrm e^{\eta_2 t}\mathbb P_\alpha(Y(t)\neq\partial) > \varepsilon.\]
Thus, to prove~\eqref{eq:but}, it is enough to show that $\rho_2 = \eta_2<\kappa(1-\theta) = \kappa-\lambda$. By~\eqref{eq:def_kappa}, 
\[\kappa - \lambda = \frac{\lambda_0(\lambda_0-\lambda)}{1+\lambda_0}.\]
Also, by~\eqref{eq:def_rho2},
\[\eta_2 = \frac{\lambda_0(\lambda_0-\lambda)}{1+\lambda_0} 
+\frac{2d-1}{2d} - \frac{\lambda_0 -\lambda}{2d(1+\lambda)}.\]
Thus $\eta_2 < \kappa-\lambda$ if and only if
\[\frac{2d-1}{2d} < \frac{\lambda_0 -\lambda}{2d(1+\lambda)}
\quad\Leftrightarrow\quad
\lambda_0>2d-1+2d\lambda,\]
which holds, by assumption.

\paragraph{Acknowledgements:}
We are very grateful to Denis Villemonais for guiding us through~\cite{CV23}, Daniel Kious for early discussions on this project, Marcel Ortgiese for comments on the discussion of the literature, and Ofer Zeitouni for insightful comments, which led to Proposition~\ref{prop:exp}.

\bibliographystyle{alpha}
\bibliography{MABC}  
\end{document}